\newtheorem{definition}{Definition}[section]
\newtheorem{theorem}[definition]{Theorem}
\newtheorem{lemma}[definition]{Lemma}
\newtheorem{remark}[definition]{Remark}
\newtheorem{conjecture}[definition]{Conjecture}
\newtheorem{proposition}[definition]{Proposition}
\begin{document}
\title{\bf 
A conjecture concerning\\ the
 $q$-Onsager algebra}
\author{
Paul Terwilliger 
}
\date{}

\maketitle
\begin{abstract}
The $q$-Onsager algebra $\mathcal O_q$ is defined by two generators $W_0, W_1$ and two relations called the $q$-Dolan/Grady relations.
Recently Baseilhac and Kolb obtained a PBW basis for  $\mathcal O_q$ with elements denoted
\begin{align*}
\lbrace B_{n \delta+ \alpha_0} \rbrace_{n=0}^\infty,
\qquad \quad 
\lbrace B_{n \delta+ \alpha_1} \rbrace_{n=0}^\infty,
\qquad \quad 
\lbrace B_{n \delta} \rbrace_{n=1}^\infty.
\end{align*}
\noindent  In their recent study of a current algebra $\mathcal A_q$, Baseilhac and Belliard  conjecture that there exist elements
\begin{align*}
 \lbrace W_{-k}\rbrace_{k=0}^\infty, \qquad \lbrace  W_{k+1}\rbrace_{k=0}^\infty, \qquad  
 \lbrace  G_{k+1} \rbrace_{k=0}^\infty,
\qquad
\lbrace   {\tilde G}_{k+1} \rbrace_{k=0}^\infty
\end{align*}
in $\mathcal O_q$ that  satisfy the defining relations for $\mathcal A_q$. In order to establish this conjecture, it is desirable to know how the elements 
 on the second displayed line above are related to  the elements on the first displayed line above.
 In the present paper, we conjecture the precise relationship and give some supporting evidence.  This evidence consists of some computer checks on SageMath due to Travis Scrimshaw,
 a proof of the analog  conjecture for the Onsager algebra $\mathcal O$,
and a proof of our conjecture for a homomorphic image of $\mathcal O_q$ called the universal Askey-Wilson algebra.
\medskip

\bigskip

\noindent
{\bf Keywords}. $q$-Onsager algebra; $q$-Dolan/Grady relations; PBW basis; tridiagonal pair.
\hfil\break
\noindent {\bf 2020 Mathematics Subject Classification}. 
Primary: 17B37. Secondary: 05E14, 81R50.

 \end{abstract}

\section{Introduction}
We will be discussing the $q$-Onsager algebra $\mathcal O_q$ \cite{bas1,qSerre}. This infinite-dimensional associative algebra is defined by two generators $W_0$, $W_1$ and two relations called the
$q$-Dolan/Grady relations; see Definition \ref{def:U}
below. One can view $\mathcal O_q$ as a $q$-analog of the universal enveloping algebra of the Onsager Lie algebra $\mathcal O$ \cite{Davfirst, Dav, Dolgra, Onsager, perk, perk2, roan}.
\medskip

\noindent
The algebra $\mathcal O_q$ originated in algebraic combinatorics \cite{qSerre}.
There is a family of algebras called tridiagonal algebras
\cite[Definition~3.9]{qSerre} that arise in the study of
association schemes
\cite[Lemma~5.4]{tersub3} and tridiagonal pairs
\cite[Theorem~10.1]{TD00},
\cite[Theorem~3.10]{qSerre}.
The algebra $\mathcal O_q$ is the ``most general'' example
of a tridiagonal algebra
\cite[Section~1.2]{ItoTerAug}. A finite-dimensional irreducible $\mathcal O_q$-module is essentially the same thing as a tridiagonal pair of $q$-Racah type \cite[Theorem~3.10]{qSerre}. 
These tridiagonal pairs are classified up to isomorphism in \cite[Theorem~3.3]{ItoTer}. To our knowledge the $q$-Dolan/Grady relations first appeared in \cite[Lemma~5.4]{tersub3}.
\medskip

\noindent  The algebra $\mathcal O_q$ has applications outside combinatorics.
 For instance, $\mathcal O_q$ is used 
 to study boundary
integrable systems 
\cite{
bas2,
bas1,
bas8,
basXXZ,
basBel,
BK05,
bas4,
basKoi,
basnc}.
The algebra $\mathcal O_q$  can be realized as a left or right coideal subalgebra of the quantized enveloping algebra
$U_q(\widehat{\mathfrak{sl}}_2)$; see \cite{bas8, basXXZ, kolb}. The algebra $\mathcal O_q$ is the simplest example of a quantum symmetric pair coideal subalgebra 
of affine type \cite[Example~7.6]{kolb}. 
A Drinfeld type presentation of $\mathcal O_q$ is obtained in \cite{LuWang}, and this  is used in \cite{LRW} to realize $\mathcal O_q$ as an $\iota$Hall algebra of the projective line.
There is an injective algebra homomorphism from $\mathcal O_q$ into
the algebra $\square_q$ 
\cite[Proposition~5.6]{pospart},
and a noninjective algebra  homomorphism from $\mathcal O_q$ into the universal Askey-Wilson algebra $\Delta_q$
\cite[Sections~9,10]{uaw}.
In \cite[Section~4]{basXXZ} some infinite-dimensional $\mathcal O_q$-modules are constructed using $q$-vertex operators. 
In \cite{ItoTerAug} the augmented $q$-Onsager algebra  is introduced; this algebra is obtained from $\mathcal O_q$ by adding an extra generator.
The augmented $q$-Onsager algebra is used in \cite{BT17} to derive a $Q$-operator.
In \cite{bas8} a higher rank generalization of $\mathcal O_q$ is introduced, and applied to affine Toda theories with boundaries.
\medskip

\noindent 
In \cite[Theorem~4.5]{BK},
Baseilhac and Kolb obtain a Poincar\'e-Birkhoff-Witt (or PBW)
basis for $\mathcal O_q$. They obtain this PBW basis by using a method of Damiani 
\cite{damiani}
along with
two automorphisms of
$\mathcal O_q$ that are roughly analogous to the
Lusztig automorphisms of
$U_q(\widehat{\mathfrak{sl}}_2)$.
The PBW basis elements are denoted
\begin{align}
\lbrace B_{n \delta+ \alpha_0} \rbrace_{n=0}^\infty,
\qquad \quad 
\lbrace B_{n \delta+ \alpha_1} \rbrace_{n=0}^\infty,
\qquad \quad 
\lbrace B_{n \delta} \rbrace_{n=1}^\infty.
\label{eq:UpbwIntro}
\end{align}
\noindent
In mathematical physics, $\mathcal O_q$ comes up naturally in the context of a reflection algebra \cite{bas2,bas1}. 
Using a framework of Sklyanin \cite{sklyanin}, in \cite{BK05, basnc} 
 a current algebra $\mathcal A_q$ for $\mathcal O_q$ is introduced.
In \cite[Definition~3.1]{basnc} Baseilhac and Shigechi give a presentation of $\mathcal A_q$ by generators and relations. The generators are denoted
\begin{align*}
 \lbrace \mathcal W_{-k}\rbrace_{k=0}^\infty, \qquad \lbrace \mathcal W_{k+1}\rbrace_{k=0}^\infty, \qquad  
 \lbrace \mathcal G_{k+1} \rbrace_{k=0}^\infty,
\qquad
\lbrace  \mathcal {\tilde G}_{k+1} \rbrace_{k=0}^\infty
\end{align*}
and the relations are given in 
\eqref{eq:3p1}--\eqref{eq:3p11} below.
\medskip

\noindent We now summarize some recent results about $\mathcal A_q$.
 In \cite[Section~3]{basXXZ} a reflection algebra is used to obtain
 a generating function for quantities in a commutative subalgebra of $\mathcal A_q$.
  In \cite{BK05, bas4} 
 some finite-dimensional tensor product representations of $\mathcal A_q$ are constructed, and used 
 to create quantum integrable spin chains. 
 The algebra $\mathcal A_q$ is used to study the open XXZ spin chain with generic nondiagonal boundary conditions \cite{bas4, basKoi} 
  and also its thermodynamic limit \cite{basXXZ,  BKoj14, BKoj14B}. 
 In \cite{BKoj14, BKoj14B} the study of $\mathcal A_q$ is combined with the $q$-vertex operator approach of the Kyoto school,  to derive correlation functions and form factors.
For the open XXZ spin chain in the thermodynamic limit, the algebra $\mathcal A_q$ is used in \cite{BB16} to classify the non-abelian symmetries for any type of boundary condition. 
In \cite{BBC}, a limit $q\mapsto 1$ is taken in $\mathcal O_q$ to obtain a presentation of the Onsager algebra $\mathcal O$ in terms of a non-standard Yang-Baxter algebra.
 In \cite{BC17}, a similar limiting process is applied to $\mathcal A_q$, to obtain a Lie algebra $\mathcal A$ that turns out to be isomorphic to $\mathcal O$.
 An explicit isomorphism between $\mathcal O$ and $\mathcal A$ is established, and explicit relations between the generators of $\mathcal O$ and $\mathcal A$ are given.
\medskip

\noindent
 The algebras $\mathcal A_q$ and  $\mathcal O_q$ are both  $q$-analogs of the universal enveloping algebra of $\mathcal O$, 
 so it is natural to ask how $\mathcal A_q$ is related to $\mathcal O_q$. Baseilhac and Belliard investigate this issue in \cite{basBel}; their results are summarized as follows.
In \cite[line (3.7)]{basBel} they show that  $\mathcal W_0$, $\mathcal W_1$ satisfy the $q$-Dolan/Grady relations. 
In \cite[Section~3]{basBel} they show that $\mathcal A_q$ is generated by $\mathcal W_0$, $\mathcal W_1$ together with the
central elements $\lbrace \Delta_n \rbrace_{n=1}^\infty$ defined in
 \cite[Lemma~2.1]{basBel}.
In \cite[Section~3]{basBel} they consider the quotient algebra of $\mathcal A_q$ obtained
by sending $\Delta_n$ to a scalar for all $n\geq 1$. The construction yields an algebra homomorphism
 $\Psi$ from $\mathcal O_q$ onto this quotient.
In \cite[Conjecture~2]{basBel}
Baseilhac and Belliard conjecture that $\Psi$ is an isomorphism.  If the conjecture is true then there exists an algebra homomorphism  $\mathcal A_q\to \mathcal O_q$ that sends
$\mathcal W_0 \mapsto W_0$ and $\mathcal W_1 \mapsto W_1$. In this case there exist elements
\begin{align}
\label{eq:4gensIntro}
 \lbrace  W_{-k}\rbrace_{k=0}^\infty, \qquad \lbrace W_{k+1}\rbrace_{k=0}^\infty, \qquad  
 \lbrace G_{k+1} \rbrace_{k=0}^\infty,
\qquad
\lbrace  {\tilde G}_{k+1} \rbrace_{k=0}^\infty
\end{align}
 in $\mathcal O_q$ that satisfy the relations \eqref{eq:3p1}--\eqref{eq:3p11}.
In order to make progress on the above conjecture, it is desirable to know how the elements \eqref{eq:4gensIntro} are related to the elements in
\eqref{eq:UpbwIntro}. In the present paper, we conjecture the precise relationship and give some supporting evidence. 
Our conjecture statement is Conjecture  \ref{conj:M}. Our supporting evidence consists of some computer checks on SageMath (see \cite{sage}) due to Travis Scrimshaw, 
 a proof of the analog  conjecture for the Onsager algebra $\mathcal O$,
and a proof of the conjecture at the level of the algebra $\Delta_q$ mentioned above.
\medskip

\medskip

\noindent The paper is organized as follows. Section 2 contains some preliminaries. In Section 3 we recall the algebra $\mathcal O_q$,
and describe the PBW basis due to Baseilhac and Kolb.
In Sections 4, 5 we develop some results about generating functions that will be used in Conjecture \ref{conj:M}.
In Section 6 we state Conjecture \ref{conj:M} and explain its meaning.
In Section 7 we present our evidence supporting Conjecture \ref{conj:M}. In Section 8 we give some comments. In Appendices A, B we display
in detail some equations from the main body of the paper.
\medskip

\noindent 

\section{Preliminaries}
Throughout  the paper, the following notational conventions are in effect.
Recall the natural numbers 
$\mathbb N =\lbrace 0,1,2,\ldots \rbrace$ and integers $\mathbb Z = \lbrace 0, \pm 1, \pm 2, \ldots \rbrace$.
 Let $\mathbb F$ denote a field. Every vector space 
 mentioned is over $\mathbb F$. Every algebra mentioned is associative, over $\mathbb F$, and 
has a multiplicative identity. 
 \begin{definition}\rm 
(See \cite[p.~299]{damiani}.)
Let $ \mathcal A$ denote an algebra. A {\it Poincar\'e-Birkhoff-Witt} (or {\it PBW}) basis for $\mathcal A$
consists of a subset $\Omega \subseteq \mathcal A$ and a linear order $<$ on $\Omega$
such that the following is a basis for the vector space $\mathcal A$:
\begin{align*}
a_1 a_2 \cdots a_n \qquad n \in \mathbb N, \qquad a_1, a_2, \ldots, a_n \in \Omega, \qquad
a_1 \leq a_2 \leq \cdots \leq a_n.
\end{align*}
We interpret the empty product as the multiplicative identity in $\mathcal A$.
\end{definition}
\noindent We will be discussing generating functions. Let $\mathcal A$ denote an algebra and let $t$ denote an indeterminate.
For a sequence $\lbrace a_n \rbrace_{n \in \mathbb N}$ of elements in $\mathcal A$, the corresponding generating function is
\begin{align*}
 a(t) = \sum_{n \in \mathbb N} a_n t^n.
 \end{align*} 
 The above sum is formal; issues of convergence are not considered.
 We call $a(t)$ the {\it generating function over $\mathcal A$ with coefficients $\lbrace a_n \rbrace_{n \in \mathbb N}$}.
  For generating functions
 $a(t)=\sum_{n \in \mathbb N} a_n t^n$ and
 $b(t) = \sum_{n \in \mathbb N} b_n t^n$ over $\mathcal A$, their product $a(t)b(t)$ is the generating function $\sum_{n \in \mathbb N}c_n t^n$  such that 
 $c_n = \sum_{i=0}^n a_i b_{n-i}$ for $n\in \mathbb N$.
 The set of generating functions over $\mathcal A$ forms an algebra.
 Let $a(t) = \sum_{n \in \mathbb N} a_n t^n$ denote a generating function over $\mathcal A$. We say that $a(t)$ is
 {\it normalized} whenever $a_0=1$. If  $0 \not= a_0 \in \mathbb F$ then define
\begin{align}
a(t)^\vee = a^{-1}_0 a(t),
\label{eq:avee}
\end{align}
and note that $a(t)^\vee$ is normalized.
 \medskip
 
 \noindent Fix a nonzero $q \in \mathbb F$
that is not a root of unity.
Recall the notation
\begin{align*}
\lbrack n\rbrack_q = \frac{q^n-q^{-n}}{q-q^{-1}}
\qquad \qquad n \in \mathbb N.
\end{align*}

\section{The $q$-Onsager algebra $\mathcal O_q$}
In this section we recall the $q$-Onsager algebra $\mathcal O_q$.
\noindent For elements $X, Y$ in any algebra, define their
commutator and $q$-commutator by 
\begin{align*}
\lbrack X, Y \rbrack = XY-YX, \qquad \qquad
\lbrack X, Y \rbrack_q = q XY- q^{-1}YX.
\end{align*}
\noindent Note that 
\begin{align*}
\lbrack X, \lbrack X, \lbrack X, Y\rbrack_q \rbrack_{q^{-1}} \rbrack
= 
X^3Y-\lbrack 3\rbrack_q X^2YX+ 
\lbrack 3\rbrack_q XYX^2 -YX^3.
\end{align*}

\begin{definition} \label{def:U} \rm
(See \cite[Section~2]{bas1}, \cite[Definition~3.9]{qSerre}.)
Define the algebra $\mathcal O_q$ by generators $W_0$, $W_1$ and relations
\begin{align}
\label{eq:qOns1}
&\lbrack W_0, \lbrack W_0, \lbrack W_0, W_1\rbrack_q \rbrack_{q^{-1}} \rbrack =(q^2-q^{-2})^2\lbrack W_1, W_0 \rbrack,
\\
\label{eq:qOns2}
&\lbrack W_1, \lbrack W_1, \lbrack W_1, W_0\rbrack_q \rbrack_{q^{-1}}\rbrack = (q^2-q^{-2})^2\lbrack W_0, W_1 \rbrack.
\end{align}
We call $\mathcal O_q$ the {\it $q$-Onsager algebra}.
The relations \eqref{eq:qOns1}, \eqref{eq:qOns2}  are called the {\it $q$-Dolan/Grady relations}.
\end{definition}
\begin{remark}
\rm In \cite{BK} Baseilhac and Kolb define the $q$-Onsager algebra in a slightly more general way that involves two scalar parameters $c, q$. Our $\mathcal O_q$ is their
$q$-Onsager algebra with $c=q^{-1}(q-q^{-1})^2$.
\end{remark}
\begin{remark}\label{rem:qEqual1}\rm We clarify how to recover the Onsager algebra $\mathcal O$ from $\mathcal O_q$ by taking a limit $q\mapsto 1$. To keep things simple, assume that
$\mathbb F=\mathbb C$. In \eqref{eq:qOns1}, \eqref{eq:qOns2} make a change of variables $W_0 = \xi A_0$ and $W_1 = \xi A_1$ with $\xi = \sqrt{-1}(q-q^{-1})/2$. Simplify and set $q=1$ to obtain
\begin{align*}
\lbrack A_0, \lbrack A_0, \lbrack A_0, A_1\rbrack \rbrack \rbrack =16\lbrack A_0, A_1 \rbrack,
\qquad \quad
\lbrack A_1, \lbrack A_1, \lbrack A_1, A_0\rbrack \rbrack\rbrack = 16\lbrack A_1, A_0 \rbrack.
\end{align*}
\noindent These are the Dolan/Grady relations and the defining relations for $\mathcal O$ \cite[Section~2.1]{BC17}.
\end{remark}

\noindent In \cite{BK}, Baseilhac and Kolb obtain a PBW basis for $\mathcal O_q$ that involves some elements
\begin{align}
\lbrace B_{n \delta+ \alpha_0} \rbrace_{n=0}^\infty,
\qquad \quad 
\lbrace B_{n \delta+ \alpha_1} \rbrace_{n=0}^\infty,
\qquad \quad 
\lbrace B_{n \delta} \rbrace_{n=1}^\infty.
\label{eq:Upbw}
\end{align}
These elements are recursively defined  as follows. Writing $B_\delta  = q^{-2}W_1 W_0 - W_0 W_1$ we have
\begin{align}
&B_{\alpha_0}=W_0,  \qquad \qquad 
B_{\delta+\alpha_0} = W_1 + 
\frac{q \lbrack B_{\delta}, W_0\rbrack}{(q-q^{-1})(q^2-q^{-2})},
\label{eq:line1}
\\
&
B_{n \delta+\alpha_0} = B_{(n-2)\delta+\alpha_0}
+ 
\frac{q \lbrack B_{\delta}, B_{(n-1)\delta+\alpha_0}\rbrack}{(q-q^{-1})(q^2-q^{-2})} \qquad \qquad n\geq 2
\label{eq:line2}
\end{align}
and 
\begin{align}
&B_{\alpha_1}=W_1,  \qquad \qquad 
B_{\delta+\alpha_1} = W_0 - 
\frac{q \lbrack B_{\delta}, W_1\rbrack}{(q-q^{-1})(q^2-q^{-2})},
\label{eq:line3}
\\
&
B_{n \delta+\alpha_1} = B_{(n-2)\delta+\alpha_1}
- 
\frac{q \lbrack B_{\delta}, B_{(n-1)\delta+\alpha_1}\rbrack}{(q-q^{-1})(q^2-q^{-2})} \qquad \qquad n\geq 2.
\label{eq:line4}
\end{align}
Moreover for $n\geq 2$,
\begin{equation}
\label{eq:Bdelta}
B_{n \delta} = 
q^{-2}  B_{(n-1)\delta+\alpha_1} W_0
- W_0 B_{(n-1)\delta+\alpha_1}  + 
(q^{-2}-1)\sum_{\ell=0}^{n-2} B_{\ell \delta+\alpha_1}
B_{(n-\ell-2) \delta+\alpha_1}.
\end{equation}
By \cite[Proposition~5.12]{BK} the elements $\lbrace B_{n\delta}\rbrace_{n=1}^\infty$ mutually commute.

\begin{lemma}
\label{prop:damiani} 
{\rm (See \cite[Theorem~4.5]{BK}.)}
Assume that $q$ is transcendental over $\mathbb F$. Then
 a PBW basis for $\mathcal O_q$ is obtained by the elements {\rm \eqref{eq:Upbw}} in any linear
order.
\end{lemma}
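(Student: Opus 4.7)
The plan is to reproduce the argument of Baseilhac and Kolb \cite[Theorem~4.5]{BK}, which in turn adapts Damiani's method \cite{damiani} for constructing a PBW basis of the positive part of $U_q(\widehat{\mathfrak{sl}}_2)$. The overarching strategy is to transport a PBW-type result from a larger, well-understood algebra down to $\mathcal O_q$ via its realization as a coideal subalgebra.

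First, I would invoke Kolb's embedding of $\mathcal O_q$ as a right (or left) coideal subalgebra of $U_q(\widehat{\mathfrak{sl}}_2)$, sending $W_0, W_1$ to explicit elements in the Chevalley generators. Second, I would introduce the two algebra automorphisms of $\mathcal O_q$ that are the key technical device of \cite{BK}; these play the role of the Lusztig automorphisms $T_0, T_1$ in the ambient algebra. One then verifies, by direct calculation from the $q$-Dolan/Grady relations \eqref{eq:qOns1}, \eqref{eq:qOns2}, that the recursions \eqref{eq:line1}--\eqref{eq:Bdelta} are exactly what one obtains by iterating these automorphisms on $W_0$ and $W_1$. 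In this way the elements in \eqref{eq:Upbw} play the role of Damiani's real and imaginary root vectors.

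Third, I would establish the straightening relations needed to rewrite an arbitrary monomial in $\mathcal O_q$ in terms of the proposed basis. The ingredients are: pairwise commutation of the imaginary-root elements $\{B_{n\delta}\}_{n=1}^\infty$ from \cite[Proposition~5.12]{BK}; $q$-commutation rules between the $\{B_{n\delta+\alpha_0}\}$ and $\{B_{n\delta+\alpha_1}\}$ modulo products of lower-weight PBW elements; and covariance of all these relations under the two automorphisms. These relations give a spanning argument: iterated application lets any word in $W_0, W_1$ be reduced to an $\mathbb F$-linear combination of ordered monomials in the elements \eqref{eq:Upbw}, and since $\{B_{n\delta}\}$ mutually commute while the two half-line families can be freely interleaved with each other and with the imaginary-root family up to such reductions, the ordering on $\Omega$ may be chosen arbitrarily.

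Fourth, for linear independence I would push the ordered monomials through the coideal embedding into $U_q(\widehat{\mathfrak{sl}}_2)$ and match them (up to known triangular change of basis) with ordered monomials in the Damiani PBW basis of the appropriate subspace; linear independence of the latter, together with the transcendence of $q$ over $\mathbb F$ (which prevents any accidental vanishing of the coefficients appearing in \eqref{eq:line1}--\eqref{eq:Bdelta}), yields linear independence of the elements \eqref{eq:Upbw}. The hardest step, in my estimation, is the straightening/spanning step: proving that every product $B_{m\delta+\alpha_0}\,B_{n\delta+\alpha_1}$ (and the analogous crossings with $B_{k\delta}$) can be re-expressed as an ordered sum of PBW monomials requires a careful double induction in $m$ and $n$ using the recursions together with the automorphism-equivariance of the relations, and this is the content of the bulk of \cite{BK}.
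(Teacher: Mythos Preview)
The paper gives no proof of this lemma at all: it is stated with the citation ``See \cite[Theorem~4.5]{BK}'' and then used as a black box. So there is nothing in the paper to compare your argument against; your outline already goes far beyond what the paper supplies.

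As a sketch of the Baseilhac--Kolb argument your outline is broadly on target: the braid-group-type automorphisms, the identification of the recursions \eqref{eq:line1}--\eqref{eq:Bdelta} with iterated application of those automorphisms, and the straightening relations for spanning are indeed the main ingredients. One point to be careful about is the linear-independence step. In \cite{BK} this is not done by pushing ordered monomials through the coideal embedding and matching them one-for-one with Damiani's basis; rather, it proceeds via a filtration on $\mathcal O_q$ and an analysis of the associated graded algebra, together with a specialization argument that requires working over $\mathbb Q(q)$ (whence the transcendence hypothesis on $q$). Your proposed route through the embedding is plausible in spirit but is not the argument in \cite{BK}, and making it rigorous would require controlling exactly which monomials in $U_q(\widehat{\mathfrak{sl}}_2)$ the images hit, which is not immediate.
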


\begin{remark} \label{rem:qEqual1A} \rm
With reference to Remark \ref{rem:qEqual1}, we give the limiting values of the elements \eqref{eq:Upbw}. In  \eqref{eq:line1}--\eqref{eq:Bdelta} and the expression for  $B_{\delta}$ below \eqref{eq:Upbw}, make a change of variables
\begin{align*}
B_{n\delta+\alpha_0} = \xi A_{-n}, \qquad \quad B_{n\delta+\alpha_1} = \xi A_{n+1},
\qquad \quad B_{m\delta} = 4 \xi^2 B_m
\end{align*}
 for $n\geq 0 $ and $m\geq 1$. Simplify and set $q=1$ to obtain
 \begin{align*}
 \lbrack B_1, A_n \rbrack = 2A_{n+1} -2A_{n-1}, \qquad \qquad 
 \lbrack A_m, A_0\rbrack = 4B_m 
 \end{align*}
 \noindent for $n \in \mathbb Z$ and $m\geq 1$.
 \noindent The elements $\lbrace A_n \rbrace_{n \in \mathbb Z}$, $\lbrace B_n \rbrace_{n=1}^\infty$ form the basis for $\mathcal O$ given in
 \cite[Definition~2.1]{BC17}.
\end{remark}

\begin{definition} \label{def:Bgen}
\rm We define a generating function in the indeterminate $t$:
\begin{align}
B(t) = \sum_{n\in \mathbb N} B_{n\delta} t^n, \qquad \qquad B_{0\delta} = q^{-2}-1.
 \label{eq:zerodelta}
 \end{align}
\end{definition}
\noindent In Section 6 we will make a conjecture about $B(t)$. In Sections 4, 5 we motivate the conjecture with some comments about generating functions.

\section{Generating functions over a commutative algebra}
\noindent Throughout  this section the following notational conventions are in effect.
We fix a commutative algebra  $\mathcal A$. Every  generating function  mentioned 
is  over $\mathcal A$.
 \medskip
 
 \noindent
 The following results are readily checked.
 \begin{lemma} \label{lem:inv} A generating function $a(t)=\sum_{n \in \mathbb N} a_n t^n$  is invertible if and only if $a_0$ is invertible in $\mathcal A$. In this case
 $(a(t))^{-1}=\sum_{n \in \mathbb N}b_n t^n$ where
 $ b_0 = a^{-1}_0$ and for $n\geq 1$,
 \begin{align*}
 b_n = - a_0^{-1}\sum_{k=1}^n a_k b_{n-k}.
 \end{align*}
 \end{lemma}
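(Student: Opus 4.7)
The plan is to prove both implications by equating coefficients in the product of generating functions, using the commutativity of $\mathcal A$ to collapse the distinction between left and right inverses.

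For the forward direction, suppose $a(t)$ is invertible with inverse $b(t) = \sum_{n \in \mathbb N} b_n t^n$. The constant term of the defining relation $a(t)b(t) = 1$ gives $a_0 b_0 = 1$. Since $\mathcal A$ is commutative, this already witnesses $a_0$ as invertible in $\mathcal A$ with $a_0^{-1} = b_0$.

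For the converse, assume $a_0$ is invertible in $\mathcal A$. Define $\lbrace b_n\rbrace_{n \in \mathbb N}$ by the recursion in the statement, namely $b_0 = a_0^{-1}$ and $b_n = -a_0^{-1} \sum_{k=1}^n a_k b_{n-k}$ for $n \geq 1$, and let $b(t) = \sum_{n \in \mathbb N} b_n t^n$. Writing $a(t) b(t) = \sum_{n \in \mathbb N} c_n t^n$ with $c_n = \sum_{i=0}^n a_i b_{n-i}$, we see $c_0 = a_0 b_0 = 1$ and, for $n \geq 1$,
\begin{align*}
c_n = a_0 b_n + \sum_{k=1}^n a_k b_{n-k} = a_0 \bigl(-a_0^{-1} \sum_{k=1}^n a_k b_{n-k}\bigr) + \sum_{k=1}^n a_k b_{n-k} = 0,
\end{align*}
so $a(t)b(t) = 1$. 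Because $\mathcal A$ is commutative, the product of generating functions is commutative as well, so $b(t) a(t) = a(t) b(t) = 1$, and $b(t)$ is the desired two-sided inverse.

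There is no real obstacle; the only thing worth flagging is the role of commutativity of $\mathcal A$, which is what allows the recursion to yield a genuine two-sided inverse and what makes $a_0 b_0 = 1$ suffice for invertibility of $a_0$ in the first direction.
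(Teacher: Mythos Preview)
Your proof is correct and complete. The paper does not give a proof of this lemma at all, merely remarking that it is ``readily checked,'' so your argument is exactly the kind of routine verification the author had in mind.
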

 
 \begin{lemma} \label{lem:abbpre} For generating functions $a(t)= \sum_{n \in \mathbb N} a_n t^n$ and $b(t)=\sum_{n\in \mathbb N} b_n t^n$ the following are equivalent:
 \begin{enumerate}
 \item[\rm (i)] $a(t)= b(qt) b(q^{-1}t)$;
 \item[\rm (ii)] 
 $a_n = \sum_{i=0}^n b_i b_{n-i} q^{2i-n}$ for $n \in \mathbb N$.
 \end{enumerate}
 \end{lemma}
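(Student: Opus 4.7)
The plan is to prove the equivalence by a direct coefficient-by-coefficient comparison, using only the definition of the product of generating functions recalled in Section 2. Two generating functions over $\mathcal A$ are equal if and only if their coefficients agree in every degree, so it suffices to compute the coefficient of $t^n$ in $b(qt)\,b(q^{-1}t)$ and identify it with $\sum_{i=0}^n b_i b_{n-i} q^{2i-n}$.

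First I would substitute $qt$ and $q^{-1}t$ into $b(t) = \sum_{n \in \mathbb N} b_n t^n$ to obtain
\begin{align*}
b(qt) = \sum_{i \in \mathbb N} b_i q^i t^i, \qquad b(q^{-1}t) = \sum_{j \in \mathbb N} b_j q^{-j} t^j.
\end{align*}
These are well-defined generating functions over $\mathcal A$ since the sums are formal and the scalars $q^{\pm i}$ commute with elements of $\mathcal A$. Next I would apply the Cauchy product rule recalled in Section 2 to compute
\begin{align*}
b(qt)\,b(q^{-1}t) = \sum_{n \in \mathbb N} c_n t^n, \qquad c_n = \sum_{i=0}^n (b_i q^i)(b_{n-i} q^{-(n-i)}) = \sum_{i=0}^n b_i b_{n-i}\, q^{2i-n}.
\end{align*}
Condition (i) asserts $a(t) = b(qt)b(q^{-1}t)$, which by uniqueness of coefficients is equivalent to $a_n = c_n$ for all $n \in \mathbb N$; this is precisely condition (ii).

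There is no real obstacle in this argument; the statement is essentially a bookkeeping of $q$-factors in the Cauchy product. The only point requiring attention is combining the exponents as $q^i \cdot q^{-(n-i)} = q^{2i-n}$. Commutativity of $\mathcal A$ is not strictly needed for this particular computation (the scalars $q^{\pm i}$ already commute with elements of $\mathcal A$), but it is a standing assumption of Section 4 and is certainly consistent with the manipulation above.
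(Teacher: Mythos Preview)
Your proof is correct and is exactly the straightforward coefficient comparison the paper has in mind; the paper itself does not give a separate argument, noting only that this lemma is ``readily checked.''
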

 

\begin{lemma} \label{lem:qsqrt}
For a normalized generating function $a(t)= \sum_{n\in \mathbb N} a_n t^n$, there exists a unique normalized generating function $b(t)=\sum_{n\in \mathbb N} b_n t^n$ such that
\begin{align*}
a(t) = b(qt)b(q^{-1}t).
\end{align*}
 Moreover for $n\geq 1$,
\begin{align*}
 b_n = \frac{ a_n - \sum_{i=1}^{n-1} b_i b_{n-i} q^{2i-n}}{q^n+q^{-n}}.
 \end{align*}
 \end{lemma}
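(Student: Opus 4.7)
The plan is to apply Lemma \ref{lem:abbpre} to translate the functional equation $a(t)=b(qt)b(q^{-1}t)$ into the equivalent system of coefficient equations
\begin{align*}
a_n = \sum_{i=0}^{n} b_i b_{n-i} q^{2i-n}, \qquad n \in \mathbb N,
\end{align*}
and then solve this system recursively for $b_0, b_1, b_2, \ldots$. Both existence and uniqueness will drop out of the recursion simultaneously.

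For $n=0$ the equation reads $a_0 = b_0^2$. Since $a(t)$ is normalized we have $a_0=1$, and requiring $b(t)$ to be normalized forces $b_0=1$, which satisfies this equation. For $n\geq 1$, I separate out the two extreme terms in the sum, namely $i=0$ and $i=n$; both involve $b_n$, and together they contribute $b_n b_0 (q^n+q^{-n}) = b_n(q^n+q^{-n})$. Moving them to the left side gives
\begin{align*}
a_n - \sum_{i=1}^{n-1} b_i b_{n-i} q^{2i-n} = b_n(q^n + q^{-n}),
\end{align*}
where the remaining sum depends only on $b_1,\ldots,b_{n-1}$, which are already determined at the $n$-th stage of the induction. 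Solving for $b_n$ yields the claimed formula.

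The only point requiring attention is that the denominator $q^n + q^{-n}$ is nonzero for every $n\geq 1$. If it vanished, then $q^{2n}=-1$, hence $q^{4n}=1$, contradicting the standing hypothesis that $q$ is not a root of unity. Hence the recursion determines each $b_n$ uniquely from $b_0,\ldots,b_{n-1}$ and the given $a_n$, giving both the existence of a normalized $b(t)$ satisfying the asserted formula and the uniqueness of $b(t)$ among normalized generating functions with $a(t) = b(qt)b(q^{-1}t)$. I expect no real obstacle here; the argument is a direct induction, with the nonvanishing of $q^n+q^{-n}$ being the only non-formal ingredient.
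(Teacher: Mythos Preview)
Your proof is correct and is essentially the routine verification the paper has in mind; the paper states Lemmas \ref{lem:inv}--\ref{lem:qsqrt} together with the remark that they ``are readily checked,'' and your argument via Lemma \ref{lem:abbpre} and recursion is exactly that check. Your explicit observation that $q^n+q^{-n}\neq 0$ because $q$ is not a root of unity is a nice touch the paper leaves implicit.
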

 \begin{definition}\label{def:qsqrt}
 \rm Referring to Lemma \ref{lem:qsqrt}, we call $b(t)$ the {\it $q$-square root} of $a(t)$.
 \end{definition}

 \begin{lemma} \label{lem:abbp} For generating functions $a(t)= \sum_{n \in \mathbb N} a_n t^n$ and $b(t)=\sum_{n\in \mathbb N} b_n t^n$ the following are equivalent:
 \begin{enumerate}
 \item[\rm (i)] $ a(t)= b\bigl(\frac{q+q^{-1}}{t+t^{-1}} \bigr)$;
 \item[\rm (ii)] $a_0=b_0$ and for $n\geq 1$,
 \begin{align}
 \label{eq:abp}
 a_n = \sum_{\ell=0}^{\lfloor (n-1) /2 \rfloor} (-1)^\ell \binom{n-1-\ell}{\ell} \lbrack 2 \rbrack_q^{n-2 \ell} b_{n-2\ell}.
\end{align}
 \end{enumerate}
 \end{lemma}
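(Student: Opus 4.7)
The plan is to directly expand the right-hand side of (i) as a formal power series in $t$ and match coefficients. Observe first that
\[
\frac{q+q^{-1}}{t+t^{-1}} = \frac{[2]_q\, t}{1+t^2},
\]
which is a power series in $t$ with zero constant term. Consequently the substitution
\[
b\!\left(\frac{q+q^{-1}}{t+t^{-1}}\right) = \sum_{m\in\mathbb N} b_m\, [2]_q^m\, \frac{t^m}{(1+t^2)^m}
\]
is a well-defined generating function in $t$ (each coefficient of $t^n$ is a finite sum, since $t^m/(1+t^2)^m$ has lowest order $t^m$), and commutativity of $\mathcal A$ ensures that powers and products of the $b_m$'s behave as expected.

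Next, for $m\geq 1$ I would apply the generalized binomial theorem:
\[
(1+t^2)^{-m} \;=\; \sum_{k\geq 0} (-1)^k \binom{m+k-1}{k} t^{2k},
\]
so that
\[
\frac{t^m}{(1+t^2)^m} \;=\; \sum_{k\geq 0} (-1)^k \binom{m+k-1}{k} t^{m+2k}.
\]
The $m=0$ term contributes $b_0$ to the constant coefficient. Setting $n = m+2k$ and reindexing by $\ell = k$ (so $m = n-2\ell$ and $m+k-1 = n-\ell-1$), the coefficient of $t^n$ in the right-hand side becomes
\[
b_0 \cdot [n{=}0] \;+\; \sum_{\ell=0}^{\lfloor(n-1)/2\rfloor} (-1)^\ell \binom{n-\ell-1}{\ell} [2]_q^{n-2\ell}\, b_{n-2\ell},
\]
the constraint $m \geq 1$ translating into $\ell \leq \lfloor (n-1)/2\rfloor$.

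Matching the coefficient of $t^n$ with that of $a(t)=\sum_n a_n t^n$ then gives $a_0=b_0$ together with the displayed formula \eqref{eq:abp}, establishing (i)$\Rightarrow$(ii). The reverse implication is immediate, since two formal power series in $t$ that agree coefficient-by-coefficient are equal. There is no real obstacle here beyond correctly bookkeeping the binomial reindexing; the principal point to verify carefully is the range condition $\ell \leq \lfloor(n-1)/2\rfloor$, which arises precisely from excluding the $m=0$ term (which does not admit the binomial expansion above) and handling it separately as the $a_0=b_0$ case.
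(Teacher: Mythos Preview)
Your proof is correct and follows essentially the same approach as the paper: both rewrite $\frac{q+q^{-1}}{t+t^{-1}}=[2]_q\,t(1+t^2)^{-1}$, separate the constant term $b_0$, expand $(1+t^2)^{-m}$ via the negative binomial series, and reindex to extract the coefficient of $t^n$. The only superfluous remark is the appeal to commutativity of $\mathcal A$---no products of distinct $b_m$'s occur, so this plays no role.
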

 \begin{proof} 
Note that for $k \in \mathbb N$,
\begin{align}
\label{eq:power2}
(1-t)^{-k-1} = \sum_{\ell \in \mathbb N} \binom{k+\ell}{\ell} t^\ell.
\end{align}
\noindent We have
\begin{align*}
b\biggl( \frac{q+q^{-1}}{t+t^{-1}}\biggr)
&= \sum_{n \in \mathbb N} 
\biggl( \frac{q+q^{-1}}{t+t^{-1}}\biggr)^n b_n
= 
b_0+
\sum_{k \in \mathbb N} 
\biggl( \frac{q+q^{-1}}{t+t^{-1}}\biggr)^{k+1} b_{k+1}.
\end{align*}
\noindent We have
\begin{align*}
\frac{q+q^{-1}}{t+t^{-1}}
= \lbrack 2 \rbrack_q t(1+t^2)^{-1}.
\end{align*}
By this and \eqref{eq:power2} we find that for $k \in \mathbb N$,
\begin{align*}
\biggl( \frac{q+q^{-1}}{t+t^{-1}}\biggr)^{k+1}
= 
 \lbrack 2 \rbrack^{k+1}_q t^{k+1}
\sum_{\ell \in \mathbb N} (-1)^\ell \binom{k+\ell}{\ell}  t^{2 \ell}.
\end{align*}
By these comments
\begin{align*}
b\biggl( \frac{q+q^{-1}}{t+t^{-1}}\biggr) &= b_0 + \sum_{k, \ell\in \mathbb N}
 (-1)^\ell \binom{k+\ell}{\ell} \lbrack 2 \rbrack^{k+1}_q b_{k+1} t^{k+1+2 \ell} \\
 &= b_0 + \sum_{n=1}^\infty \sum_{\ell=0}^{\lfloor (n-1) /2 \rfloor} (-1)^\ell \binom{n-1-\ell}{\ell} \lbrack 2 \rbrack_q^{n-2 \ell} b_{n-2\ell} t^n.
\end{align*}
The result follows.
\end{proof}
 
 \begin{lemma} \label{lem:qsym} 
 For a generating function $a(t)=\sum_{n \in \mathbb N} a_n t^n$, there exists a unique generating function $b(t)=\sum_{n\in \mathbb N} b_n t^n$ such that
\begin{align}
\label{eq:qsym}
a(t) = b\biggl(\frac{q+q^{-1}}{t+ t^{-1}}\biggr).
\end{align}
Moreover $b_0= a_0 $ and for $n\geq 1$,
\begin{align*}
 b_n = \frac{ a_n - \sum_{\ell=1}^{\lfloor (n-1) /2 \rfloor} (-1)^\ell \binom{n-1-\ell}{\ell} \lbrack 2 \rbrack_q^{n-2 \ell} b_{n-2\ell}}{\lbrack 2 \rbrack_q^n}.
\end{align*}
\end{lemma}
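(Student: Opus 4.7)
The plan is to deduce this directly from Lemma \ref{lem:abbp}. Indeed, that lemma shows the equation
\begin{align*}
a(t) = b\biggl(\frac{q+q^{-1}}{t+t^{-1}}\biggr)
\end{align*}
is equivalent to $a_0 = b_0$ together with the identity \eqref{eq:abp} for every $n \geq 1$. So the strategy is to view \eqref{eq:abp} as a recursion that determines $b_n$ from $a_n$ and the previously-determined $b_k$ with $k < n$, and then to verify that this recursion can always be solved uniquely.

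To do so I would isolate the $\ell=0$ term in \eqref{eq:abp}. Since $\binom{n-1}{0}=1$, that term equals $[2]_q^n\, b_n$, so \eqref{eq:abp} rearranges to
\begin{align*}
[2]_q^n\, b_n = a_n - \sum_{\ell=1}^{\lfloor (n-1)/2\rfloor} (-1)^\ell \binom{n-1-\ell}{\ell} [2]_q^{n-2\ell}\, b_{n-2\ell}.
\end{align*}
Because $q$ is not a root of unity, $[2]_q = q+q^{-1} \neq 0$ (otherwise $q^2=-1$), and hence $[2]_q^n$ is invertible in $\mathbb F$. Dividing through yields the displayed formula for $b_n$.

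Existence and uniqueness of $b(t)$ now follow by strong induction on $n$: setting $b_0 = a_0$ and then using the boxed formula to define $b_n$ for $n\geq 1$ produces the unique sequence of coefficients satisfying $a_0=b_0$ and \eqref{eq:abp} for all $n\geq 1$; by Lemma \ref{lem:abbp} this is the same as satisfying \eqref{eq:qsym}. There is no real obstacle here beyond noting that $[2]_q$ is invertible; the content of the lemma is essentially that the upper-triangular linear system relating $(a_n)$ to $(b_n)$ induced by \eqref{eq:abp} has nonzero diagonal entries $[2]_q^n$, so it is invertible term by term.
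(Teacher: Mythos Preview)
Your proof is correct and follows exactly the approach the paper intends: it simply says ``This is a routine consequence of Lemma~\ref{lem:abbp},'' and you have supplied the routine details by isolating the $\ell=0$ term of \eqref{eq:abp}, noting that $[2]_q^n$ is invertible since $q$ is not a root of unity, and inverting the resulting triangular recursion.
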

\begin{proof} This is a routine consequence of Lemma  \ref{lem:abbp}.
\end{proof}

 \begin{definition}\label{def:qsym}
 \rm Referring to Lemma
 \ref{lem:qsym}, we call $b(t)$ the {\it $q$-symmetrization} of $a(t)$.
 \end{definition}
 \noindent We now combine the above constructions.
 \begin{proposition} \label{prop:combine}
 Let $a(t) =\sum_{n\in \mathbb N} a_n t^n$ denote a normalized generating function. Then for a generating function $b(t)=\sum_{n\in \mathbb N} b_n t^n$ the following
 are equivalent:
 \begin{enumerate}
 \item[\rm (i)] $b(t)$ is the $q$-symmetrization of the $q$-square root of the inverse of $a(t)$;
\item[\rm (ii)] $b(t) $ is normalized and
 \begin{align}
  a(t) b\biggl( \frac{q+q^{-1}}{q t + q^{-1}t^{-1}} \biggr) b\biggl( \frac{q+q^{-1}}{ q^{-1} t + q t^{-1}} \biggr) = 1;
  \label{eq:abb}
  \end{align}
  \item[\rm (iii)] $b(t)$ is normalized and
  \begin{align}
  \label{eq:abab}
  a(qt) b\biggl( \frac{q+q^{-1}}{q^2 t + q^{-2}t^{-1}} \biggr) =a(q^{-1} t) b\biggl( \frac{q+q^{-1}}{q^{-2} t + q^{2}t^{-1}} \biggr);
  \end{align}
  \item[\rm (iv)] $b_0=1$ and for $n\geq 1$,
 \begin{align}
\label{eq:recBG1}
0 = \lbrack n \rbrack_q a_n  
+
\sum_{\stackrel{\scriptstyle j+k+2\ell+1=n,}{\scriptstyle j,k,\ell\geq 0}}
(-1)^\ell 
\binom{k+\ell}{\ell}
\lbrack 2n-j \rbrack_q
\lbrack 2 \rbrack^{k+1}_q
a_j b_{k+1}.
\end{align}
\end{enumerate}
  \end{proposition}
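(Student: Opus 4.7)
The plan is to introduce the auxiliary generating function $c(t) := b\bigl(\tfrac{q+q^{-1}}{t+t^{-1}}\bigr)$; by Definition~\ref{def:qsym}, the conditions ``$b_0 = 1$ and $b$ is the $q$-symmetrization of $c$'' are equivalent to ``$c_0 = 1$ and $c$ is defined by this formula''. The key observation is that the four elaborate $b$-arguments appearing in \eqref{eq:abb} and \eqref{eq:abab} collapse to $c(qt)$, $c(q^{-1}t)$, $c(q^2 t)$, $c(q^{-2}t)$, since $s + s^{-1}$ assumes the requisite shape when $s$ equals $qt$, $q^{-1}t$, $q^2 t$, $q^{-2}t$ respectively. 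Thus (ii) reads $a(t) c(qt) c(q^{-1}t) = 1$ while (iii) reads $a(qt) c(q^2 t) = a(q^{-1}t) c(q^{-2} t)$, with the normalization $b_0 = 1$ matching $c_0 = 1$.

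For (i) $\Leftrightarrow$ (ii), rearrange the reformulated (ii) as $c(qt) c(q^{-1}t) = a(t)^{-1}$; since $c$ is normalized, Lemma~\ref{lem:qsqrt} identifies $c$ as the $q$-square root of $a(t)^{-1}$, which combined with $b$ being the $q$-symmetrization of $c$ is precisely (i). For (ii) $\Leftrightarrow$ (iii), substituting $t \mapsto q^{\pm 1} t$ in the reformulated (ii) produces $a(qt) c(q^2 t) c(t) = 1 = a(q^{-1}t) c(t) c(q^{-2} t)$; cancelling the invertible factor $c(t)$ yields (iii). Conversely, (iii) asserts $f(qt) = f(q^{-1}t)$ for $f(t) := a(t) c(qt) c(q^{-1}t)$, so $(q^n - q^{-n}) f_n = 0$ for every $n$; as $q$ is not a root of unity, $f_n = 0$ for $n \geq 1$ while $f_0 = a_0 b_0^2 = 1$, recovering (ii).

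For (iii) $\Leftrightarrow$ (iv), the expansion carried out in the proof of Lemma~\ref{lem:abbp} immediately gives $c(q^{\pm 2} t) = c_0 + \sum_{m \geq 1} c_m q^{\pm 2m} t^m$, where the $c_m$ are the coefficients given by \eqref{eq:abp}. Extracting the coefficient of $t^n$ (for $n \geq 1$) from the reformulated (iii) yields $\sum_{j=0}^n a_j c_{n-j} (q^{2n-j} - q^{-(2n-j)}) = 0$, which after factoring out $q - q^{-1}$ simplifies to $\sum_{j=0}^n a_j c_{n-j}[2n-j]_q = 0$. Separating the $j = n$ summand (contributing $[n]_q a_n$, since $c_0 = 1$) and substituting \eqref{eq:abp} for the remaining $c_{n-j}$ with the reparametrization $k+1 = n - j - 2\ell$ reproduces exactly \eqref{eq:recBG1}.

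The heart of the argument is really the recognition that all four ostensibly complicated arguments of $b$ collapse to translates of a single series $c(t)$; once that is in place, (i) and (ii) are a direct application of Lemma~\ref{lem:qsqrt}, (ii) $\Leftrightarrow$ (iii) is a shift-and-cancel trick powered by the hypothesis that $q$ is not a root of unity, and (iii) $\Leftrightarrow$ (iv) is a mechanical coefficient extraction. The only step demanding any concentration is the index bookkeeping for (iii) $\Leftrightarrow$ (iv), but Lemma~\ref{lem:abbp} already encodes the needed combinatorics, so no genuinely new computation is required.
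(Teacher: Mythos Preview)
Your proof is correct and shares the same underlying ideas as the paper's, but the organization differs slightly. The paper argues in a cycle $(\mathrm{i})\Rightarrow(\mathrm{ii})\Rightarrow(\mathrm{iii})\Rightarrow(\mathrm{iv})\Rightarrow(\mathrm{i})$, closing the loop at the end by a uniqueness argument: the recursion \eqref{eq:recBG1} determines $b(t)$ uniquely once $b_0=1$, so any solution coincides with the one produced by (i). You instead prove direct biconditionals, and in particular your $(\mathrm{iii})\Rightarrow(\mathrm{ii})$ step---setting $f(t)=a(t)c(qt)c(q^{-1}t)$, reading (iii) as $f(qt)=f(q^{-1}t)$, and invoking that $q$ is not a root of unity to force $f_n=0$ for $n\geq 1$---is a clean shortcut not present in the paper. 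Your explicit introduction of $c(t)=b\bigl(\tfrac{q+q^{-1}}{t+t^{-1}}\bigr)$ also makes transparent what the paper leaves implicit in the line ``Combining these equations we obtain \eqref{eq:abb}.'' The coefficient-extraction for $(\mathrm{iii})\Leftrightarrow(\mathrm{iv})$ is exactly what the paper means by ``write each side as a power series and compare coefficients,'' and you have carried it out correctly (one trivial slip: $f_0=a_0c_0^2$, not $a_0b_0^2$, though $c_0=b_0=1$ so it makes no difference).
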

  \begin{proof}    ${{\rm (i)} \Rightarrow {\rm (ii)}}$ Let $a_1(t)$ denote the inverse of $a(t)$, and let $a_2(t)$ denote the $q$-square root of $a_1(t)$. By assumption $b(t)$ is the $q$-symmetrization of $a_2(t)$.
  The generating function $a(t)$ is normalized, so $a_1(t)$ is normalized by Lemma
  \ref{lem:inv}. Now $a_2(t)$ is normalized by Lemma
  \ref{lem:qsqrt} and Definition
  \ref{def:qsqrt}.
   Now $b(t)$ is normalized by Lemma  \ref{lem:qsym} and Definition \ref{def:qsym}.
  By construction 
  \begin{align*}
  a(t) a_1(t) = 1, \qquad \quad a_1(t) = a_2(qt) a_2 (q^{-1}t), \qquad \quad a_2(t) = b\biggl(\frac{q+q^{-1}}{t+t^{-1}}\biggr).
  \end{align*}
  Combining these equations we obtain \eqref{eq:abb}.
  \\
  \noindent 
   ${{\rm (ii)} \Rightarrow {\rm (iii)}}$ 
   In the equation
  \eqref{eq:abb}, replace $t$ by $qt$ and also by $q^{-1}t$. Compare the two resulting equations to obtain
  \eqref{eq:abab}.
  \\
  \noindent ${{\rm (iii)} \Rightarrow {\rm (iv)}}$ Write each side of \eqref{eq:abab} as a power series in $t$, and compare coefficients. 
  \\
  \noindent ${{\rm (iv)} \Rightarrow {\rm (i)}}$ By assumption, the generating function $b(t)$ is normalized and satisfies \eqref{eq:recBG1}. Let $b'(t)$ denote the 
  the $q$-symmetrization of the $q$-square root of the inverse of $a(t)$. From our earlier comments, the generating function $b'(t)$ is normalized and satisfies \eqref{eq:recBG1}.
  The equations
  \eqref{eq:recBG1} admit a unique solution, so $b(t)=b'(t)$.
 \end{proof}
 
 \begin{definition}
 \label{def:qexp} \rm Referring to Proposition \ref{prop:combine}, we call $b(t)$ the {\it $q$-expansion of $a(t)$} whenever the equivalent conditions (i)--(iv) are satisfied.
 \end{definition}

 \begin{lemma} 
\label{lem:com}
Let $a(t)=\sum_{n\in \mathbb N} a_n t^n$ denote a normalized generating function. Let $b(t)=\sum_{n\in \mathbb N} b_n t^n$ denote the $q$-expansion of $a(t)$.
Then for $n\geq 1$ the following hold:
\begin{enumerate}
\item[\rm (i)] $b_n$ is a polynomial  in $a_1, a_2,\ldots, a_n$ that has coefficients in $\mathbb F$ and total degree $n$, where we view $a_k$ as having degree $k$ for $1\leq k \leq n$.
In this polynomial the coefficient of $a_n$ is $-\lbrack n \rbrack_q \lbrack 2n \rbrack^{-1}_q \lbrack 2 \rbrack^{-n}_q$.
\item[\rm (ii)] $a_n$ is a polynomial in $b_1, b_2, \ldots, b_n$ that has coefficients in $\mathbb F$ and total degree $n$, where we view $b_k$ as having degree $k$ for $1 \leq k \leq n$.
In this polynomial the coefficient of $b_n$ is $-\lbrack n \rbrack^{-1}_q \lbrack 2n \rbrack_q \lbrack 2 \rbrack^{n}_q$.
\end{enumerate}
\end{lemma}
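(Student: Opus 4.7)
The plan is to induct on $n$, using the recursion of Proposition \ref{prop:combine}(iv) as the main tool. First I would isolate the two highest-weight terms. The term $\lbrack n \rbrack_q a_n$ sits outside the sum, and within the sum the unique summand with $(j, k+1, \ell) = (0, n, 0)$ contributes $\lbrack 2n \rbrack_q \lbrack 2 \rbrack_q^n b_n$, using $a_0 = 1$. A quick check on the indexing shows that no other summand involves $a_n$ (taking $j = n$ would force $k + 2\ell + 1 = 0$, which is impossible) or $b_n$ (requiring $k+1 = n$ forces $j = \ell = 0$, the already-extracted summand). Thus the recursion takes the clean form
\begin{equation*}
\lbrack n \rbrack_q a_n + \lbrack 2n \rbrack_q \lbrack 2 \rbrack_q^n b_n + R_n = 0,
\end{equation*}
where $R_n$ is an $\mathbb F$-linear combination of products $a_j b_{k+1}$ with $0 \leq j \leq n-1$, $1 \leq k+1 \leq n-1$, and $j + (k+1) = n - 2 \ell \leq n$ for some $\ell \geq 0$.

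For part (i), I would induct on $n$; the base case $n = 1$ gives $b_1 = -a_1/\lbrack 2 \rbrack_q^2$, as predicted by the formula. For the inductive step, apply the inductive hypothesis to each factor $b_{k+1}$ appearing in $R_n$ to express it as a polynomial in $a_1, \ldots, a_{k+1}$ of total degree $k+1$; then each product $a_j b_{k+1}$ becomes a polynomial in $a_1, \ldots, a_{n-1}$ of total degree $j + (k+1) \leq n$. Dividing the displayed identity by $\lbrack 2n \rbrack_q \lbrack 2 \rbrack_q^n$ (both factors nonzero since $q$ is not a root of unity) realizes $b_n$ as a polynomial in $a_1, \ldots, a_n$ of total degree at most $n$, with the coefficient of $a_n$ equal to $-\lbrack n \rbrack_q \lbrack 2n \rbrack_q^{-1} \lbrack 2 \rbrack_q^{-n}$. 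This coefficient is nonzero, so the total degree is exactly $n$.

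For part (ii), the same induction works, but now one solves the displayed identity for $a_n$ instead. The inductive hypothesis turns each factor $a_j$ with $1 \leq j < n$ appearing in $R_n$ into a polynomial in $b_1, \ldots, b_j$ of total degree $j$, so each product $a_j b_{k+1}$ becomes a polynomial in $b_1, \ldots, b_{n-1}$ of total degree $j + (k+1) \leq n$. Dividing by $\lbrack n \rbrack_q$ exhibits $a_n$ as a polynomial in $b_1, \ldots, b_n$ of total degree $n$ with the coefficient of $b_n$ equal to $-\lbrack n \rbrack_q^{-1} \lbrack 2n \rbrack_q \lbrack 2 \rbrack_q^n$.

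The only real obstacle is the bookkeeping around the index set in Proposition \ref{prop:combine}(iv), namely verifying that the two highest-weight contributions are exactly $\lbrack n \rbrack_q a_n$ and $\lbrack 2n \rbrack_q \lbrack 2 \rbrack_q^n b_n$ and that every remaining summand has $j + (k+1) \leq n$ with the extremal indices forbidden. Once this is cleanly set up, the two inductions proceed mechanically and in parallel.
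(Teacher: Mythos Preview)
Your proposal is correct and matches the paper's approach exactly: the paper's own proof is simply ``By \eqref{eq:recBG1} and induction on $n$'' for (i) and ``By (i) above and induction on $n$'' for (ii), and you have correctly filled in the details of that induction, including isolating the $[n]_q a_n$ and $[2n]_q[2]_q^n b_n$ terms from the recursion and checking that the remainder $R_n$ involves only lower-index variables.
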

\begin{proof} (i) By \eqref{eq:recBG1} and induction on $n$.
\\
\noindent (ii) By (i) above and induction on $n$.
\end{proof}

\section{Generating functions over a noncommutative algebra}
Throughout this section the following notational conventions are in effect. We fix an algebra $\mathcal B$ that is not necessarily commutative.
Every generating function mentioned is over $\mathcal B$.

\begin{definition}\label{def:com}
\rm A generating function $a(t)=\sum_{n \in \mathbb N} a_n t^n $ is said to be {\it commutative} whenever
$\lbrace a_n \rbrace_{n \in \mathbb N}$ mutually commute.
\end{definition}

\begin{lemma} \label{lem:AB}
For a commutative generating function $a(t)=\sum_{n \in \mathbb N} a_n t^n $ there exists a commutative subalgebra $\mathcal A$
of $\mathcal B$ that contains $a_n $ for $n \in \mathbb N$.
\end{lemma}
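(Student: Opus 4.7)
The plan is to take $\mathcal A$ to be the subalgebra of $\mathcal B$ generated by the set $\{a_n : n \in \mathbb N\}$, and then verify the two required properties. Containment of each $a_n$ in $\mathcal A$ is immediate from the definition of the generated subalgebra (recall that every algebra in this paper is required to contain a multiplicative identity, so $\mathcal A$ is well defined as the smallest unital subalgebra of $\mathcal B$ containing the generators).

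Next I would verify commutativity. Every element of $\mathcal A$ can be written as a (noncommutative) polynomial in finitely many of the $a_n$ with coefficients in $\mathbb F$. By Definition \ref{def:com}, the generators $\{a_n\}_{n \in \mathbb N}$ pairwise commute, so any two monomials in them can be rearranged into a common normal form (say, by bubble-sorting adjacent factors). Consequently any two elements of $\mathcal A$ commute, so $\mathcal A$ is a commutative subalgebra of $\mathcal B$ containing $a_n$ for every $n \in \mathbb N$.

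There is no real obstacle here; the only point to be careful about is that the term \emph{subalgebra} includes the multiplicative identity, and that products of pairwise commuting elements themselves pairwise commute, which is an elementary induction on the length of the monomials involved.
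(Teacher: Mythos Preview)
Your proposal is correct and follows exactly the same approach as the paper: take $\mathcal A$ to be the subalgebra of $\mathcal B$ generated by $\{a_n\}_{n\in\mathbb N}$. The paper's proof is a single sentence to this effect, leaving the commutativity verification implicit; your added justification that monomials in pairwise commuting generators commute is fine but not strictly necessary at this level.
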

\begin{proof} Take $\mathcal A$ to be the subalgebra of $\mathcal B$ generated by $\lbrace a_n \rbrace_{n \in \mathbb N}$.
\end{proof}

\noindent  Referring to Lemma \ref{lem:AB}, we may view $a(t)$ as a generating function over $\mathcal A$.

\begin{definition}\label{def:qexpC}
\rm Let $a(t)=\sum_{n \in \mathbb N} a_n t^n$ denote a generating function that is commutative and normalized.
 By the {\it  $q$-expansion of $a(t)$} we mean
the $q$-expansion of the generating function $a(t)$ over  $\mathcal A$, where $\mathcal A$ is from Lemma \ref{lem:AB}. By \eqref{eq:recBG1} and Lemma \ref{lem:com}, the $q$-expansion of $a(t)$ is  independent of
the choice of $\mathcal A$.
\end{definition}

\section{Some elements in $\mathcal O_q$}

\noindent In the previous two sections we discussed generating functions. We now return our attention to the $q$-Onsager algebra $\mathcal O_q$.
Recall from Section 1 that in \cite[Conjecture~2]{basBel}  Baseilhac and Belliard effectively conjecture that there exist elements
\begin{align}
\label{eq:4gens}
 \lbrace W_{-k}\rbrace_{k\in \mathbb N}, \qquad \lbrace W_{k+1}\rbrace_{k \in \mathbb N}, \qquad  
 \lbrace G_{k+1} \rbrace_{k \in \mathbb N},
\qquad
\lbrace  \tilde G_{k+1} \rbrace_{k \in \mathbb N}
\end{align}
in $\mathcal O_q$ that satisfy the following relations.
For $k,\ell \in \mathbb N$,
\begin{align}
&
 \lbrack  W_0,W_{k+1}\rbrack= 
\lbrack W_{-k}, W_{1}\rbrack=
({{\tilde G}}_{k+1} -  G_{k+1})/(q+q^{-1}),
\label{eq:3p1}
\\
&
\lbrack W_0, G_{k+1}\rbrack_q= 
\lbrack \tilde G_{k+1}, W_{0}\rbrack_q= 
\rho  W_{-k-1}-\rho 
 W_{k+1},
\label{eq:3p2}
\\
&
\lbrack  G_{k+1},  W_{1}\rbrack_q= 
\lbrack W_{1}, {{\tilde G}}_{k+1}\rbrack_q= 
\rho  W_{k+2}-\rho 
 W_{-k},
\label{eq:3p3}
\\
&
\lbrack W_{-k},  W_{-\ell}\rbrack=0,  \qquad 
\lbrack W_{k+1}, W_{\ell+1}\rbrack= 0,
\label{eq:3p4}
\\
&
\lbrack W_{-k}, W_{\ell+1}\rbrack+
\lbrack  W_{k+1}, W_{-\ell}\rbrack= 0,
\label{eq:3p5}
\\
&
\lbrack W_{-k}, G_{\ell+1}\rbrack+
\lbrack G_{k+1},  W_{-\ell}\rbrack= 0,
\label{eq:3p6}
\\
&
\lbrack W_{-k}, {\tilde G}_{\ell+1}\rbrack+
\lbrack {\tilde G}_{k+1}, W_{-\ell}\rbrack= 0,
\label{eq:3p7}
\\
&
\lbrack W_{k+1},  G_{\ell+1}\rbrack+
\lbrack  G_{k+1},  W_{\ell+1}\rbrack= 0,
\label{eq:3p8}
\\
&
\lbrack W_{k+1}, {\tilde G}_{\ell+1}\rbrack+
\lbrack {\tilde G}_{k+1}, W_{\ell+1}\rbrack= 0,
\label{eq:3p9}
\\
&
\lbrack G_{k+1}, G_{\ell+1}\rbrack=0,
\qquad 
\lbrack {\tilde G}_{k+1}, {\tilde G}_{\ell+1}\rbrack= 0,
\label{eq:3p10}
\\
&
\lbrack {\tilde G}_{k+1}, G_{\ell+1}\rbrack+
\lbrack G_{k+1}, {\tilde G}_{\ell+1}\rbrack= 0.
\label{eq:3p11}
\end{align}
In the above equations  $\rho = -(q^2-q^{-2})^2$. For notational convenience define
\begin{align}
\label{eq:GtG}
G_0 = -(q-q^{-1}) \lbrack 2 \rbrack^2_q, \qquad \qquad \tilde G_0 = -(q-q^{-1}) \lbrack 2 \rbrack^2_q.
\end{align}
\begin{remark}\label{rem:qEqual1C}
\rm Referring to Remark \ref{rem:qEqual1}, we give the limiting values of the elements \eqref{eq:4gens}.
In \eqref{eq:3p1}--\eqref{eq:3p11}, make a change of variables
\begin{align*}
W_{-k} =\xi W'_{-k}, \qquad
W_{k+1} =\xi W'_{k+1}, \qquad
G_{k+1} =\xi^2 G'_{k+1}, \qquad
{\tilde G}_{k+1} =\xi^2 {\tilde G}'_{k+1}
\end{align*}
for $k \in \mathbb N$. Simplify and set $q=1$.  Lines \eqref{eq:3p1}--\eqref{eq:3p3} become
\begin{align}
&
 \lbrack  W'_0,W'_{k+1}\rbrack= 
\lbrack W'_{-k}, W'_{1}\rbrack=
({\tilde G}'_{k+1} -  G'_{k+1})/2,
\label{eq:3p1alt}
\\
&
\lbrack W'_0, G'_{k+1}\rbrack= 
\lbrack \tilde G'_{k+1}, W'_{0}\rbrack= 
16 W'_{-k-1}-16
 W'_{k+1},
\label{eq:3p2alt}
\\
&
\lbrack  G'_{k+1},  W'_{1}\rbrack= 
\lbrack W'_{1}, {\tilde G}'_{k+1}\rbrack= 
16 W'_{k+2}-16
 W'_{-k}
\label{eq:3p3alt}
\end{align}
and \eqref{eq:3p4}--\eqref{eq:3p11} remain essentially unchanged. 
In \cite[Definition~4.1]{BC17} and \cite[Theorem~2]{BC17},  Baseilhac and Cramp\'e display a basis
$ \lbrace W'_{-k}\rbrace_{k\in \mathbb N}$, $\lbrace W'_{k+1}\rbrace_{k \in \mathbb N}$, $\lbrace {\tilde G}'_{k+1} \rbrace_{k \in \mathbb N}$ for $\mathcal O$ that satisfies
 \eqref{eq:3p4}--\eqref{eq:3p3alt}, where  $G'_{k+1}=-\tilde G'_{k+1}$ for $k \in \mathbb N$.
\end{remark}

\noindent 
Returning to $\mathcal O_q$, it is desirable to know how the elements \eqref{eq:4gens} are related to  the elements \eqref{eq:Upbw}.  In this paper we conjecture the precise relationship. We will state the conjecture shortly.
Before stating the conjecture, we discuss what is involved. Let us simplify things by writing
the elements \eqref{eq:4gens} in terms of $W_0$, $W_1$, $\lbrace \tilde G_{k+1} \rbrace_{k\in \mathbb N}$. To do this, 
we use
\eqref{eq:3p2}, \eqref{eq:3p3} to recursively obtain
$W_{-k}$, $W_{k+1}$ for $k\geq 1$:
\begin{align*}
W_{-1} &= W_1 -\frac{\lbrack \tilde G_{1}, 
W_0\rbrack_q}{(q^2-q^{-2})^2},
\\
W_{3} &=  W_1 
-
\frac{\lbrack \tilde G_{1}, 
W_0\rbrack_q}{(q^2-q^{-2})^2}
-
\frac{\lbrack
 W_1,
\tilde G_{2} 
\rbrack_q}{(q^2-q^{-2})^2},
\\
W_{-3} &=  W_1 
-
\frac{\lbrack \tilde G_{1}, 
W_0\rbrack_q}{(q^2-q^{-2})^2}
-
\frac{\lbrack
W_1,
\tilde G_{2} 
\rbrack_q}{(q^2-q^{-2})^2}
-
\frac{\lbrack \tilde G_{3}, 
W_0\rbrack_q}{(q^2-q^{-2})^2},
\\
W_{5} &= W_1 
-
\frac{\lbrack \tilde G_{1}, 
W_0\rbrack_q}{(q^2-q^{-2})^2}
-
\frac{\lbrack
W_1,
\tilde G_{2} 
\rbrack_q}{(q^2-q^{-2})^2}
-
\frac{\lbrack \tilde G_{3}, 
W_0\rbrack_q}{(q^2-q^{-2})^2}
-
\frac{\lbrack
W_1,\tilde G_{4} 
\rbrack_q}{(q^2-q^{-2})^2},
\\
W_{-5} &= W_1 
-
\frac{\lbrack \tilde G_{1}, W_0\rbrack_q}{(q^2-q^{-2})^2}
-
\frac{\lbrack
W_1,
\tilde G_{2} 
\rbrack_q}{(q^2-q^{-2})^2}
-
\frac{\lbrack  {\tilde G}_{3}, 
 W_0\rbrack_q}{(q^2-q^{-2})^2}
-
\frac{\lbrack
W_1,
 \tilde G_{4} 
\rbrack_q}{(q^2-q^{-2})^2}
-
\frac{\lbrack  \tilde G_{5}, 
W_0\rbrack_q}{(q^2-q^{-2})^2},
\\
& \cdots
\end{align*}
\begin{align*}
W_2 &=  W_0 -
\frac{
\lbrack W_1, \tilde G_1\rbrack_q}{(q^2-q^{-2})^2
},
\\
W_{-2} &=  W_0 -
\frac{
\lbrack W_1, \tilde G_1\rbrack_q}{(q^2-q^{-2})^2}
-
\frac{
\lbrack \tilde G_2, W_0\rbrack_q}{(q^2-q^{-2})^2},
\\
W_{4} &=  W_0 -
\frac{
\lbrack W_1, \tilde G_1\rbrack_q}{(q^2-q^{-2})^2}
-
\frac{
\lbrack \tilde G_2,  W_0\rbrack_q}{(q^2-q^{-2})^2}
-
\frac{
\lbrack W_1, \tilde G_3\rbrack_q}{(q^2-q^{-2})^2},
\\
W_{-4} &=  W_0 -
\frac{
\lbrack W_1, \tilde G_1\rbrack_q}{(q^2-q^{-2})^2}
-
\frac{
\lbrack \tilde G_2, W_0\rbrack_q}{(q^2-q^{-2})^2}
-
\frac{
\lbrack W_1, \tilde G_3\rbrack_q}{(q^2-q^{-2})^2}
-
\frac{
\lbrack\tilde G_4,W_0\rbrack_q}{(q^2-q^{-2})^2},
\\
 W_{6} &=  W_0 -
\frac{
\lbrack W_1, \tilde G_1\rbrack_q}{(q^2-q^{-2})^2}
-
\frac{
\lbrack \tilde G_2,  W_0\rbrack_q}{(q^2-q^{-2})^2}
-
\frac{
\lbrack  W_1, \tilde G_3\rbrack_q}{(q^2-q^{-2})^2}
-
\frac{
\lbrack \tilde G_4, W_0\rbrack_q}{(q^2-q^{-2})^2}
-
\frac{
\lbrack W_1, \tilde G_5\rbrack_q}{(q^2-q^{-2})^2},
\\
&\cdots
\end{align*}

\noindent 
The recursion shows that for any integer $k \geq 1$, 
the generators $ W_{-k}$, $ W_{k+1}$
are given as follows. For odd $k=2r+1$,
\begin{align}
W_{-k} =  W_1
- 
\sum_{\ell=0}^r 
\frac{
\lbrack 
 \tilde G_{2\ell+1},  W_0\rbrack_q
}{(q^2-q^{-2})^2} 
-
\sum_{\ell=1}^r 
\frac{
\lbrack
 W_1,
 \tilde G_{2\ell} 
 \rbrack_q
}{(q^2-q^{-2})^2},
\label{eq:WmkA}
\\
W_{k+1} =  W_0
- 
\sum_{\ell=0}^r 
\frac{
\lbrack 
W_1,
 {\tilde G}_{2\ell+1}\rbrack_q
}{(q^2-q^{-2})^2} 
-
\sum_{\ell=1}^r 
\frac{
\lbrack
 {\tilde G}_{2\ell},
 W_0
 \rbrack_q
}{(q^2-q^{-2})^2}.
\label{eq:Wkp1A}
\end{align}
For even $k=2r$,
\begin{align}
&W_{-k} = W_0
- 
\sum_{\ell=0}^{r-1} 
\frac{
\lbrack  W_1,
  {\tilde G}_{2\ell+1}\rbrack_q
}{(q^2-q^{-2})^2} 
-
\sum_{\ell=1}^r 
\frac{
\lbrack
  {\tilde G}_{2\ell},
 W_0
 \rbrack_q
}{(q^2-q^{-2})^2},
\label{eq:WmkB}
\\
\label{eq:Wkp1B}
 &W_{k+1} =  W_1
- 
\sum_{\ell=0}^{r-1} 
\frac{
\lbrack 
{\tilde G}_{2\ell+1}, W_0\rbrack_q
}{(q^2-q^{-2})^2} 
-
\sum_{\ell=1}^r 
\frac{
\lbrack
 W_1,
 {\tilde G}_{2\ell} 
 \rbrack_q
}{(q^2-q^{-2})^2}. 
\end{align}
Next we use 
\eqref{eq:3p1} to
obtain the generators
$\lbrace G_{k+1}\rbrace_{k \in \mathbb N}$:
\begin{align}
G_{k+1} =  {\tilde G}_{k+1} + (q+q^{-1}) 
\lbrack W_1,  W_{-k} \rbrack
\qquad \qquad (k \in \mathbb N).
\label{eq:getrid}
\end{align}
We have expressed the elements \eqref{eq:4gens} in terms of
$W_0$, $W_1$, $\lbrace \tilde G_{k+1} \rbrace_{k \in \mathbb N}$. Next, we would like to know how the elements $\lbrace \tilde G_{k+1} \rbrace_{k\in \mathbb N}$ are related to the elements \eqref{eq:Upbw}. 
We will discuss this relationship using generating functions.
\medskip

\noindent Recall the generating function $B(t)$ from Definition \ref{def:Bgen}. The generating function $B(t)$ is commutative by Definition
\ref{def:com} and
 the comment above Lemma \ref{prop:damiani}.
 By \eqref{eq:zerodelta} the generating function $B(t)$ has constant term $q^{-2}-1 = -q^{-1}(q-q^{-1})$, so by 
\eqref{eq:avee} we have
\begin{align*}
B(t)^\vee  = -q(q-q^{-1})^{-1} B(t).
\end{align*}
The generating function $B(t)^\vee$ is commutative and normalized, so we may speak of its $q$-expansion as is Definition \ref{def:qexpC}.

\begin{conjecture} \label{conj:M} \rm
Define the elements 
\begin{align}
\label{eq:4gensc}
 \lbrace W_{-k}\rbrace_{k\in \mathbb N}, \qquad \lbrace W_{k+1}\rbrace_{k \in \mathbb N}, \qquad  
 \lbrace G_{k+1} \rbrace_{k \in \mathbb N},
\qquad
\lbrace  \tilde G_{k+1} \rbrace_{k \in \mathbb N}
\end{align}
in $\mathcal O_q$ as follows:
\begin{enumerate}
\item[\rm (i)] the generating function $\tilde G(t)^\vee$ is the 
$q$-expansion of $B(t)^\vee$, where  $\tilde G(t) = \sum_{n \in \mathbb N} \tilde G_n t^n$ 
and $\tilde G_0$ is from \eqref{eq:GtG};
\item[\rm (ii)]  the elements $\lbrace W_{-k}\rbrace_{k\in \mathbb N}$,
$\lbrace W_{k+1} \rbrace_{k\in \mathbb N}$ satisfy
\eqref{eq:WmkA}--\eqref{eq:Wkp1B};
\item[\rm (iii)] the elements
$\lbrace G_{k+1}\rbrace_{k \in \mathbb N}$ satisfy \eqref{eq:getrid}.
\end{enumerate}
Then the elements \eqref{eq:4gensc} satisfy \eqref{eq:3p1}--\eqref{eq:3p11}.
\end{conjecture}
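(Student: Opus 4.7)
My plan is to split the eleven relations \eqref{eq:3p1}--\eqref{eq:3p11} into identities built in by the construction and identities requiring genuine argument. Clauses (ii) and (iii) are precisely the recursive solutions of the $\tilde G$-halves of \eqref{eq:3p2}, \eqref{eq:3p3} and of the rightmost equality of \eqref{eq:3p1}, so those identities are automatic. What remains are (a) the first equality in \eqref{eq:3p1}, namely $[W_0, W_{k+1}] = [W_{-k}, W_1]$, and equivalently (via the identity $[X,Y]_q - [Y,X]_q = (q+q^{-1})[X,Y]$ together with \eqref{eq:getrid}) the $G$-halves of \eqref{eq:3p2}, \eqref{eq:3p3}; and (b) the mutual commutation relations \eqref{eq:3p4}--\eqref{eq:3p11}.

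For (a) I would proceed by induction on $k$. A direct expansion gives the $q$-Jacobi identity
\[
[W_0, [W_1, \tilde G_{k+1}]_q] + [W_1, [\tilde G_{k+1}, W_0]_q] = [[W_0, W_1]_q, \tilde G_{k+1}],
\]
and since $[W_0, W_1]_q = -qB_\delta$ while clause (i) together with Lemma \ref{lem:com} forces each $\tilde G_{k+1}$ to lie in the commutative subalgebra $\mathcal A \subseteq \mathcal O_q$ generated by $\{B_{n\delta}\}_{n\geq 1}$, the right-hand side vanishes. Coupled with the recursions \eqref{eq:WmkA}--\eqref{eq:Wkp1B}, this is the engine that propagates $[W_0, W_{k+1}] = [W_{-k}, W_1]$ from $k$ to $k+1$. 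The same commutativity of $\mathcal A$ also handles \eqref{eq:3p10}, \eqref{eq:3p11} once one pushes the $G$'s and $\tilde G$'s into $\mathcal A$ modulo the inner derivations $[W_1, \cdot\,]$ implicit in \eqref{eq:getrid}.

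For the hard commutation relations \eqref{eq:3p4}--\eqref{eq:3p9} my plan is to assemble the elements into generating functions $W_+(t), W_-(t), G(t), \tilde G(t)$, and use Proposition \ref{prop:combine} to recast the recursions \eqref{eq:WmkA}--\eqref{eq:Wkp1B} as closed functional equations linking $W_\pm(t)$ with $B(t)^\vee$. The target identities then become two-variable statements of the form $[W_\pm(s), W_\pm(t)] = 0$ and their crossed variants, to be checked by expanding in the PBW basis and repeatedly using the Baseilhac--Kolb recursions \eqref{eq:line1}--\eqref{eq:Bdelta} together with the mutual commutativity of $\{B_{n\delta}\}$.

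The main obstacle I anticipate is controlling the commutators $[W_0, B_{n\delta}]$ and $[W_1, B_{n\delta}]$ at the generating-function level: these encode all the noncommutativity of $\mathcal O_q$ that interacts nontrivially with the construction, and the conjecture is essentially the claim that a specific generating-function repackaging of them reconstructs the full current-algebra presentation. The shape of the author's supporting evidence is telling here: in the classical limit $\mathcal O$ the cross-commutators degenerate to Lie brackets with a Heisenberg-like structure, and in the quotient $\Delta_q$ the $B_{n\delta}$ collapse to scalars; in both simplified settings the strategy above should go through cleanly, which is likely why the conjecture is verifiable there. For generic $\mathcal O_q$ no closed form for these commutators appears to be known, and completing the program will probably require extra input from a Drinfeld-type presentation such as the one in \cite{LuWang}.
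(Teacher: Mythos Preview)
The statement you are trying to prove is labeled a \emph{Conjecture} in the paper, and the paper does not prove it. Section~7 is devoted to ``supporting evidence'': computer checks by Scrimshaw for $0\le k\le 5$, a verification of the $q\to 1$ limit in the Onsager algebra $\mathcal O$, and a full proof only for the (non-injective) image of $\mathcal O_q$ in the universal Askey--Wilson algebra $\Delta_q$. In that quotient the elements $B_{n\delta}$ land in a commutative subalgebra containing $C$ and the center, so $\tilde G(t)$, $W^\pm(t)$, $G(t)$ all factor as $Z(t)$ times explicit linear expressions in $A,B,C,C'$ (Proposition~\ref{prop:th}, Lemmas~\ref{lem:WWGgf}, \ref{lem:Ggf}), and the relations of Lemma~\ref{lem:ad} follow by direct computation from \eqref{eq:alpha}--\eqref{eq:gammaP}. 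No argument at the level of $\mathcal O_q$ itself is offered.

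Your outline is a sensible plan of attack, and you correctly identify the decisive obstruction: controlling the commutators $[W_0,B_{n\delta}]$ and $[W_1,B_{n\delta}]$ in $\mathcal O_q$. That is exactly why the paper can close the argument in $\Delta_q$ (where these become commutators with $C$ and vanish or collapse to known central quantities) but not in $\mathcal O_q$. One technical point in your sketch deserves a second look: your proposed induction for (a) reduces $[W_0,W_{k+2}]=[W_{-k-1},W_1]$ not to the inductive hypothesis $[W_0,W_{k+1}]=[W_{-k},W_1]$ but to $[W_0,W_{-k}]=[W_{k+1},W_1]$, which is an instance of \eqref{eq:3p4}. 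So part (a) is not actually decoupled from part (b); the two blocks have to be handled together, which reinforces your conclusion that the hard content lies in the commutation relations \eqref{eq:3p4}--\eqref{eq:3p9}.
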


\noindent  We have some comments about the $q$-expansion of $B(t)^\vee$. 
 We mentioned above that $B(t)$ is commutative, so by Lemma
\ref{lem:AB} there exists a commutative subalgebra $\mathcal A$ of $\mathcal O_q$ that contains $B_{n \delta}$ for $n \in \mathbb N$.
So $B(t)$ is over $\mathcal A$.
The $q$-expansion of $B(t)^\vee$ is over $\mathcal A$, and described as follows.
For the moment let $\tilde G(t)= \sum_{n \in \mathbb N} \tilde G_n t^n$ denote any generating function over $\mathcal A$ such that  $\tilde G_0$ satisfies
\eqref{eq:GtG}. By Proposition \ref{prop:combine} and Definitions \ref{def:qexp}, \ref{def:qexpC} we find that
\begin{align*}
\hbox{\rm
$\tilde G(t)^\vee$ is the 
$q$-expansion of $B(t)^\vee$}
\end{align*}
\noindent if and only if 
\begin{align}
\label{eq:conjMain}
B(t) 
{\tilde G}\biggl( \frac{q+q^{-1}}{qt+q^{-1} t^{-1}}\biggr)
{\tilde G}\biggl( \frac{q+q^{-1}}{q^{-1}t+q t^{-1}}\biggr) = -q^{-1} (q-q^{-1})^3 \lbrack 2\rbrack_q^4
\end{align}
 if and only if 
\begin{align}
\label{eq:genfun}
B(qt) 
{\tilde G}\biggl( \frac{q+q^{-1}}{q^2t+q^{-2} t^{-1}}\biggr) = B(q^{-1}t) 
{\tilde G}\biggl( \frac{q+q^{-1}}{q^{-2}t+q t^{-2}}\biggr)
\end{align}
 if and only if for $n\geq 1$,
\begin{align}
\label{eq:recBG}
0 = \lbrack n \rbrack_q B_{n\delta}  {\tilde G}_0 
+
\sum_{\stackrel{\scriptstyle j+k+2\ell+1=n,}{\scriptstyle j,k,\ell\geq 0}}
(-1)^\ell 
\binom{k+\ell}{\ell}
\lbrack 2n-j \rbrack_q
\lbrack 2 \rbrack^{k+1}_q
B_{j\delta}  {\tilde G}_{k+1}.
\end{align}
In Appendix A we display \eqref{eq:recBG} in detail for $1\leq n\leq 8$.

\section{Supporting evidence for Conjecture \ref{conj:M}}
 
\noindent In this section we give some supporting evidence for Conjecture \ref{conj:M}.
\medskip

\noindent Our first type of evidence is from checking via computer. The algebra $\mathcal O_q$ 
has been implemented in the computer package SageMath (see \cite{sage}) by Travis Scimshaw. Using this package Scrimshaw defined the elements
\eqref{eq:4gensc} for $0 \leq k \leq 5$ using \eqref{eq:recBG} along with \eqref{eq:WmkA}--\eqref{eq:Wkp1B} and
 \eqref{eq:getrid}.
 He then had SageMath  verify the relations among
\eqref{eq:3p1}--\eqref{eq:3p11}
that involved these defined elements.

\medskip
\noindent Our next type of evidence concerns the analog of Conjecture \ref{conj:M} for the Onsager algebra $\mathcal O$.
Consider the equation
\eqref{eq:conjMain}. In that equation
we compute the limit $q\mapsto 1$ in two steps: (i) make a change of variables as before; (ii) simplify the result and set $q=1$.\\
\noindent Step (i):
We express our generating functions as 
\begin{align}
\label{eq:Be}
B(t) &= q^{-2}-1 + 4 \xi^2 \mathcal B(t), \qquad \qquad \mathcal B(t) = \sum_{n=1}^\infty B_n t^n,
\\
\label{eq:G3}
\tilde G(t) &= -(q-q^{-1})\lbrack 2 \rbrack^2_q + \xi^2 \tilde G'(t), \qquad \qquad \tilde G'(t) = \sum_{n=1}^\infty \tilde G'_n t^n.
\end{align}
Evaluating \eqref{eq:conjMain}  using \eqref{eq:Be}, \eqref{eq:G3} and $\xi^2 = -(q-q^{-1})^2/4$ we obtain
\begin{align*}
&\biggl(
q^{-2}-1 -(q-q^{-1})^2 \mathcal B(t) \biggr)
\Biggl(
-(q-q^{-1}) \lbrack 2 \rbrack^2_q - \frac{(q-q^{-1})^2}{4} 
{\tilde G'}\biggl( \frac{q+q^{-1}}{qt+q^{-1} t^{-1}}\biggr) \Biggr)
\\
&\times \;
\Biggl(
-(q-q^{-1}) \lbrack 2 \rbrack^2_q - \frac{(q-q^{-1})^2}{4} 
{\tilde G'}\biggl( \frac{q+q^{-1}}{q^{-1}t+qt^{-1}}\biggr) \Biggr)
=
 -q^{-1} (q-q^{-1})^3 \lbrack 2\rbrack_q^4.
 \end{align*}
\noindent Step (ii): For the above equation, let $D$ denote the left-hand side minus the right-hand side. After expanding $D$ and doing some cancelation, we find that
 $D$ is equal to
$-(q-q^{-1})^4 \lbrack 2 \rbrack^2_q/2 $ times
\begin{align}
\label{eq:lead}
2 \lbrack 2 \rbrack^2_q \mathcal B(t) + \frac{1}{2q} {\tilde G'}\biggl( \frac{q+q^{-1}}{qt+q^{-1} t^{-1}}\biggr) 
+ \frac{1}{2q} {\tilde G'}\biggl( \frac{q+q^{-1}}{q^{-1}t+q t^{-1}}\biggr) 
\end{align}
\noindent plus $(q-q^{-1})^5$ times some additional terms. 
Dividing $D$ by $(q-q^{-1})^4$ and then setting $q=1$,  we find that \eqref{eq:conjMain} becomes
\begin{align}
\label{eq:MainOne}
8 \mathcal B(t) + {\tilde G'}\biggl( \frac{2}{t+ t^{-1}}\biggr) =0.
\end{align}
\noindent Equation \eqref{eq:MainOne} 
matches the equation on the right in \cite[Line~(4.8)]{BC17}. By that citation the
equation \eqref{eq:MainOne} is satisfied by the basis for $\mathcal O$ described in Remark \ref{rem:qEqual1C}.
 We have verified the analog of Conjecture \ref{conj:M} that applies to $\mathcal O$.
 \medskip

\noindent Our next type of evidence has to do with the universal Askey-Wilson algebra
 $\Delta_q$ 
\cite[Definition~1.2]{uaw}.
This algebra is defined by generators and relations.
The generators are $A,B,C$. The relations assert that
each of the following is central in $\Delta_q$:
\begin{align*}
A + \frac{qBC-q^{-1}CB}{q^2-q^{-2}},
\qquad \quad 
B + \frac{qCA-q^{-1}AC}{q^2-q^{-2}},
\qquad \quad 
C + \frac{qAB-q^{-1}BA}{q^2-q^{-2}}.
\end{align*}
For the above three central elements, multiply each by
$q+q^{-1}$ to get 
$\alpha$, $\beta$, $\gamma $. Thus
\begin{align}
&A + \frac{qBC-q^{-1}CB}{q^2-q^{-2}} = \frac{\alpha}{q+q^{-1}},
\label{eq:alpha}
\\
&B + \frac{qCA-q^{-1}AC}{q^2-q^{-2}} =\frac{\beta}{q+q^{-1}},
\label{eq:beta}
\\
&C + \frac{qAB-q^{-1}BA}{q^2-q^{-2}}= \frac{\gamma}{q+q^{-1}}.
\label{eq:gamma}
\end{align}
Each of
$\alpha$, $\beta$, $\gamma$ is central in $\Delta_q$.
By \cite[Corollary~8.3]{uaw}
the center of $\Delta_q$ is generated by 
$\alpha, \beta, \gamma, \Omega$ where
\begin{align}
\Omega = qABC+q^2 A^2 + q^{-2} B^2+ q^2 C^2-q A \alpha
-q^{-1} B \beta - q C \gamma.
\label{eq:Cas}
\end{align}
The element $\Omega$ is called the Casimir element.
By \cite[Theorem~8.2]{uaw} the elements $ \alpha, \beta, \gamma, \Omega$ are algebraically independent. We write
$\mathbb F\lbrack  \alpha, \beta, \gamma, \Omega \rbrack$ for the center of $\Delta_q$.
\medskip

\noindent Next we summarize 
from 
\cite[Section~3]{uaw}
 how
the 
 modular group ${\rm PSL}_2(\mathbb Z)$ acts on $\Delta_q$
as a group of automorphisms.
By  
\cite{RCA} 
the group ${\rm PSL}_2(\mathbb Z)$ has a presentation
by generators $\varrho$, $\sigma$ and relations $\varrho^3=1$, $\sigma^2=1$.
By \cite[Theorems~3.1,~6.4]{uaw} the group
${\rm PSL}_2(\mathbb Z)$ acts on $\Delta_q$ as a group of automorphisms
in the following way:
\begin{align*}
{\rm
\begin{tabular}{c| ccc | c c c c}
$u$ &  $A$ & $B$  & $C$
&  $\alpha $ & $\beta $  & $\gamma$ &$\Omega$
\\
\hline
$\varrho(u)$ &  $B$ & $C$ & $A$
&  $\beta $ & $\gamma $ & $\alpha$ & $\Omega$
\\
$\sigma(u) $ &  $B$ & $A$ & $C+\frac{\lbrack A,B\rbrack}{q-q^{-1}}$
&  $\beta $ & $\alpha $ & $\gamma$ & $\Omega$
\end{tabular}
}
\end{align*}

\noindent For notational convenience define
\begin{align}
\label{eq:Cp}
C' = C + \frac{\lbrack A,B\rbrack}{q-q^{-1}}.
\end{align}
\noindent Applying $\sigma$ to \eqref{eq:alpha}--\eqref{eq:gamma} and using the above table, we obtain
\begin{align}
&B + \frac{qAC'-q^{-1}C'A}{q^2-q^{-2}} = \frac{\beta}{q+q^{-1}},
\label{eq:alphaP}
\\
&A+ \frac{qC'B-q^{-1}BC'}{q^2-q^{-2}} =\frac{\alpha}{q+q^{-1}},
\label{eq:betaP}
\\
&C' + \frac{qBA-q^{-1}AB}{q^2-q^{-2}}= \frac{\gamma}{q+q^{-1}}.
\label{eq:gammaP}
\end{align}

\noindent Next we explain how $\Delta_q$ is related to
$\mathcal O_q$. 
By \cite[Theorem~2.2]{uaw}
 the algebra $\Delta_q$
has a presentation by generators $A,B,\gamma$ and relations
\begin{align}
&A^3B-\lbrack 3\rbrack_q A^2BA+\lbrack 3\rbrack_q ABA^2-BA^3
= (q^2-q^{-2})^2(BA-AB),
\label{eq:rel1}
\\
&B^3A-\lbrack 3\rbrack_q B^2AB+\lbrack 3\rbrack_q BAB^2-AB^3
= (q^2-q^{-2})^2(AB-BA),
\label{eq:rel2}
\\
&A^2B^2-B^2A^2+(q^2+q^{-2})(BABA-ABAB) = (q-q^{-1})^2(BA-AB)\gamma,
\label{eq:rel3}
\\
& \qquad \qquad \gamma A = A \gamma, \qquad \qquad \gamma B=B \gamma.
\label{eq:rel4}
 \end{align}
\noindent 
The relations
\eqref{eq:rel1}, 
\eqref{eq:rel2}  are the $q$-Dolan/Grady relations.
Consequently
there exists an 
 algebra homomorphism $\natural :\mathcal O_q \to \Delta_q$
that sends $W_0\mapsto A$ and $W_1\mapsto B$.
This homomorphism is not injective
by \cite[Theorem~10.9]{uaw}.
\medskip

\noindent For the elements   \eqref{eq:Upbw} and \eqref{eq:4gensc}
we retain the same notation for their images under $\natural$. We will show that for $\Delta_q$ the elements \eqref{eq:4gensc} satisfy the relations \eqref{eq:3p1}--\eqref{eq:3p11}.
 \medskip
 
\noindent 
 For the algebra $\Delta_q$ define
\begin{align}
\label{eq:chvar}
\Psi (t) = B(t)+1-q^{-2},
\end{align}
where $B(t)$ is from
Definition \ref{def:Bgen}.
By \eqref{eq:zerodelta} we have $\Psi(t)= \sum_{n=1}^\infty B_{n\delta} t^n$. By \cite[Corollary~5.7]{pbw} 
the elements $\lbrace B_{n\delta}\rbrace_{n=1}^\infty $are contained in the subalgebra of $\Delta_q$ generated by $\mathbb F\lbrack \alpha, \beta, \gamma, \Omega \rbrack$ and $C$.
 Consequently the elements  $\lbrace B_{n\delta}\rbrace_{n=1}^\infty$ commute with $C$, so $\Psi(t)$ commutes with $C$. By this and \cite[Line (5.19)]{pbw} 
we find that 
\begin{align}
\Psi(t) \bigl(q t + q^{-1} t^{-1} + C\bigr)
\bigl(q^{-1} t + q t^{-1} + C\bigr)
\label{eq:3}
\end{align}
is equal to $1-q^{-2}$ times
\begin{align*}
 \Omega
- \frac{(t+t^{-1}) \alpha \beta }{(t-t^{-1})^2}
-
\frac{\alpha^2+ \beta^2}{(t-t^{-1})^2}
- (t+t^{-1})\gamma
+(q+q^{-1})(t+t^{-1}) C + C^2.
\end{align*}
Upon eliminating $\Psi(t)$ from \eqref{eq:3} using \eqref{eq:chvar}, we find that
\begin{align}
B(t) \bigl(q t + q^{-1} t^{-1} + C\bigr)
\bigl(q^{-1} t + q t^{-1} + C\bigr)
\label{eq:3B}
\end{align}
is equal to $1-q^{-2}$ times
\begin{align*}
 \Omega
- \frac{(t+t^{-1}) \alpha \beta }{(t-t^{-1})^2}
-
\frac{\alpha^2+ \beta^2}{(t-t^{-1})^2}
- (t+t^{-1})\gamma
-(qt+q^{-1}t^{-1})(q^{-1}t+qt^{-1}).
\end{align*}
\noindent Define
\begin{align}
\label{eq:N}
N(t) = \frac{B(t)}{q^{-2}-1}\,
\frac{qt+q^{-1} t^{-1} + C}{qt+q^{-1}t^{-1}} \,
\frac{q^{-1}t+qt^{-1} + C}{q^{-1} t+ q t^{-1}}.
\end{align}
\noindent By the above comments
\begin{align}
N(t) = 1 + N_1(t)\Omega +  N_2(t)\alpha \beta  + N_3(t) (\alpha^2+\beta^2) + N_4(t) \gamma,
\label{eq:NNNN}
\end{align}
where
\begin{align}
N_1(t) &= \frac{-1}{ (qt+q^{-1} t^{-1})(q^{-1}t + q t^{-1})},
\label{eq:N1}
\\
N_2(t) &= \frac{t+t^{-1}}{(t-t^{-1})^2 (qt+q^{-1} t^{-1})(q^{-1}t + q t^{-1})},
\label{eq:N2}
\\
N_3(t) &= \frac{1}{(t-t^{-1})^2 (qt+q^{-1} t^{-1})(q^{-1}t + q t^{-1})},
\label{eq:N3}
\\
N_4(t) &= \frac{t+t^{-1}}{(qt+q^{-1} t^{-1})(q^{-1}t + q t^{-1})}.
\label{eq:N4}
\end{align}
Evaluating \eqref{eq:N1}--\eqref{eq:N4} using
\begin{align*}
\frac{1}{qt+q^{-1} t^{-1}} &= \sum_{n \in \mathbb N} (-1)^n q^{2n+1} t^{2n+1},
\\
\frac{1}{q^{-1}t+q t^{-1}} &= \sum_{n \in \mathbb N} (-1)^n q^{-2n-1} t^{2n+1},
\\
\frac{1}{(t-t^{-1})^{2}}&= \sum_{n\in \mathbb N} n t^{2n}
\end{align*}
we find that the functions $N_1(t)$, $N_2(t)$, $N_3(t)$, $N_4(t)$ are power series in $t$ with zero constant term.
By this and \eqref{eq:NNNN}, we may view
 $N(t)$ as a normalized generating function over $\mathbb F \lbrack \alpha, \beta, \gamma, \Omega\rbrack$.
 
 \begin{definition} \label{def:Z} \rm
 Define a generating function $Z(t)=\sum_{n \in \mathbb N} Z_n t^n$ over  $\mathbb F \lbrack \alpha, \beta, \gamma, \Omega\rbrack$ such that $Z_0=q^{-2}-q^2$
 and $Z(t)^\vee$ is the $q$-expansion of $N(t)$. 
 \end{definition}
 \noindent  The notation $Z(t)^\vee$ is explained in \eqref{eq:avee}. The $q$-expansion concept is explained in Proposition
 \ref{prop:combine} and
 Definition
  \ref{def:qexp}. By these explanations and
 Definition \ref{def:Z},
 \begin{align}
 N(t) Z\biggl(\frac{q+q^{-1}}{qt+q^{-t}t^{-1}} \biggr) Z\biggl(\frac{q+q^{-1}}{q^{-1} t + q t^{-1}}\biggr) = (q^2-q^{-2})^2.
 \label{eq:NZZ}
 \end{align}

\begin{proposition} \label{prop:th}
For the algebra $\Delta_q$,
\begin{align}
\tilde G(t) = Z(t) (q+q^{-1} + t C).
\label{eq:GC}
\end{align}
\end{proposition}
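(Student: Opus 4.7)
My strategy is to take the formula on the right-hand side as a candidate, call it $\Gamma(t) = Z(t)(q+q^{-1}+tC)$, and verify that it satisfies the two conditions that uniquely characterize $\tilde G(t)$ as specified in Conjecture \ref{conj:M}(i), namely (a) the correct constant term $\tilde G_0 = -(q-q^{-1})[2]_q^2$ from \eqref{eq:GtG}, and (b) the functional equation \eqref{eq:conjMain}. Uniqueness will follow from the recursion \eqref{eq:recBG}: given the constant term, the remaining coefficients $\tilde G_{k+1}$ are determined by $B(t)$.

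First I verify the constant term. Since $Z_0 = q^{-2}-q^2 = -(q-q^{-1})[2]_q$ by Definition \ref{def:Z}, one computes $\Gamma_0 = Z_0(q+q^{-1}) = -(q-q^{-1})[2]_q^2$, which matches $\tilde G_0$.

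Next I verify \eqref{eq:conjMain}. The key observation is that $Z(t)$ has coefficients in $\mathbb F[\alpha,\beta,\gamma,\Omega]$, which lies in the center of $\Delta_q$; meanwhile $B(t)$ commutes with $C$ (noted just before \eqref{eq:3}). Hence everything except the explicit polynomial factors in $C$ can be rearranged freely. Substituting $s = (q+q^{-1})/(qt+q^{-1}t^{-1})$ into $\Gamma(s) = Z(s)(q+q^{-1})(1 + sC/(q+q^{-1}))$ pulls out a factor of $(qt+q^{-1}t^{-1}+C)/(qt+q^{-1}t^{-1})$, and similarly for the partner factor. Multiplying $B(t)\,\Gamma(\,\cdot\,)\Gamma(\,\cdot\,)$ and grouping, the two polynomial factors in $C$ together with $B(t)$ reproduce the definition of $N(t)$ in \eqref{eq:N}, so
\begin{align*}
B(t)\,\Gamma\bigl(\tfrac{q+q^{-1}}{qt+q^{-1}t^{-1}}\bigr)\Gamma\bigl(\tfrac{q+q^{-1}}{q^{-1}t+qt^{-1}}\bigr)
= (q+q^{-1})^2(q^{-2}-1)\,N(t)\,Z\bigl(\tfrac{q+q^{-1}}{qt+q^{-1}t^{-1}}\bigr)Z\bigl(\tfrac{q+q^{-1}}{q^{-1}t+qt^{-1}}\bigr).
\end{align*}
By \eqref{eq:NZZ} the right-hand side collapses to $(q+q^{-1})^2(q^{-2}-1)(q^2-q^{-2})^2$, and a direct simplification using $(q^{-2}-1)=-q^{-1}(q-q^{-1})$ and $(q^2-q^{-2})^2=(q-q^{-1})^2[2]_q^2$ yields precisely the scalar $-q^{-1}(q-q^{-1})^3[2]_q^4$ appearing on the right-hand side of \eqref{eq:conjMain}.

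Combining these two facts with the uniqueness observation from Proposition \ref{prop:combine}(iv) and the remarks following \eqref{eq:conjMain} (namely that the recursion \eqref{eq:recBG} has a unique solution once $\tilde G_0$ is fixed), I conclude that $\Gamma(t)=\tilde G(t)$, which is the claimed identity. The only genuinely delicate point, which is not really an obstacle but deserves care, is to confirm that $Z(t)$, being a generating function over the central subalgebra $\mathbb F[\alpha,\beta,\gamma,\Omega]$, indeed commutes with $C$ (and with $B(t)$), so that all the above rearrangements are legal; everything else is a bookkeeping computation of scalars.
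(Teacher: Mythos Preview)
Your proposal is correct and follows essentially the same route as the paper: define the candidate $\Gamma(t)=Z(t)(q+q^{-1}+tC)$, use the commutativity of the subalgebra generated by $\mathbb F[\alpha,\beta,\gamma,\Omega]$ and $C$ to rearrange factors, recognize the resulting expression as $(q+q^{-1})^2(q^{-2}-1)N(t)Z(\cdot)Z(\cdot)$ via \eqref{eq:N}, apply \eqref{eq:NZZ}, and simplify the scalar. The only cosmetic difference is that you spell out the constant-term check and the uniqueness argument explicitly, whereas the paper packages both into the phrase ``by the discussion around \eqref{eq:conjMain}, it suffices to show \eqref{eq:conjMainT}.''
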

\begin{proof} Define the generating function $\mathcal {\tilde G}(t) = Z(t)(q+q^{-1} + tC)$. We show that $\tilde G(t) = \mathcal {\tilde G}(t)$.
Let $\mathcal A$ denote the subalgebra of $\Delta_q$ generated by $\mathbb F\lbrack \alpha, \beta, \gamma, \Omega \rbrack$ and $C$. Note that $\mathcal A$ is commutative.
By construction $\mathcal {\tilde G}(t)$ is over $\mathcal A$.
 By our comments below \eqref{eq:chvar},
the generating function $B(t)$ is over $\mathcal A$.
By the discussion around \eqref{eq:conjMain}, it suffices to show that
\begin{align}
\label{eq:conjMainT}
B(t) 
\mathcal {\tilde G}\biggl( \frac{q+q^{-1}}{qt+q^{-1} t^{-1}}\biggr)
\mathcal {\tilde G}\biggl( \frac{q+q^{-1}}{q^{-1}t+q t^{-1}}\biggr) = -q^{-1} (q-q^{-1})^3 \lbrack 2\rbrack_q^4.
\end{align}
Using 
\eqref{eq:N} and
 and \eqref{eq:NZZ},
 \begin{align*}
B(t) &
\mathcal {\tilde G}\biggl( \frac{q+q^{-1}}{qt+q^{-1} t^{-1}}\biggr)
\mathcal {\tilde G}\biggl( \frac{q+q^{-1}}{q^{-1}t+q t^{-1}}\biggr) 
\\&= \lbrack 2 \rbrack^2_q  
B(t) Z\biggl( \frac{q+q^{-1}}{qt+q^{-1} t^{-1}}\biggr)
\frac{qt+q^{-1} t^{-1} + C}{qt+q^{-1}t^{-1}} 
Z\biggl( \frac{q+q^{-1}}{q^{-1}t+q t^{-1}}\biggr) 
\frac{q^{-1}t+qt^{-1} + C}{q^{-1} t+ q t^{-1}} 
\\
&= \lbrack 2 \rbrack^2_q (q^{-2}-1) N(t) 
 Z\biggl( \frac{q+q^{-1}}{qt+q^{-1} t^{-1}}\biggr)
Z\biggl( \frac{q+q^{-1}}{q^{-1}t+q t^{-1}}\biggr) 
\\
&=
\lbrack 2 \rbrack^2_q (q^{-2}-1) (q^2-q^{-2})^2 
\\
&= -q^{-1}(q-q^{-1})^3 \lbrack 2 \rbrack^4_q.
\end{align*}
We have shown \eqref{eq:conjMainT}, and the result follows.
\end{proof}

\noindent Define the generating functions
\begin{align*}
W^-(t) = \sum_{n\in \mathbb N} W_{-n} t^n, \qquad\qquad W^+(t) = \sum_{n \in \mathbb N} W_{n+1} t^n.
\end{align*}
By \eqref{eq:WmkA}--\eqref{eq:Wkp1B} we obtain
\begin{align}
&W^+(t) = \frac{t 
\lbrack \tilde G(t), A\rbrack_q + 
\lbrack B, \tilde G(t) \rbrack_q}{ 
(t^2-1)(q^2-q^{-2})^2},
\label{eq:wpP}
\\
&W^-(t) = \frac{ 
\lbrack \tilde G(t), A\rbrack_q + 
t \lbrack B, \tilde G(t) \rbrack_q}{ 
(t^2-1)(q^2-q^{-2})^2}.
\label{eq:wmP}
\end{align}

\begin{lemma} 
\label{lem:WWGgf}
For the algebra $\Delta_q$,
\begin{align}
\label{eq:wp}
W^+(t) &= Z(t) \,\frac{ (q-q^{-1})(\alpha+\beta t) -(q^2-q^{-2})(t-t^{-1})B}{(q^2-q^{-2})^2(t-t^{-1})},
\\
\label{eq:wm}
W^-(t) &= Z(t) \, \frac{ (q-q^{-1}) (\alpha t + \beta) - (q^2-q^{-2})(t-t^{-1}) A}{ (q^2-q^{-2})^2(t-t^{-1})}.
\end{align}
\end{lemma}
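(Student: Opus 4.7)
The plan is to substitute the formula $\tilde G(t) = Z(t)(q+q^{-1}+tC)$ from Proposition \ref{prop:th} into the two identities \eqref{eq:wpP}, \eqref{eq:wmP} defining $W^+(t)$ and $W^-(t)$, and then simplify the resulting $q$-commutators to their closed form. The essential observation is that every coefficient of $Z(t)$ lies in $\mathbb F\lbrack\alpha,\beta,\gamma,\Omega\rbrack$, which is the center of $\Delta_q$, so $Z(t)$ commutes with $A$ and $B$. Hence the $q$-commutators distribute as
\[
[\tilde G(t),A]_q = Z(t)\bigl\{(q^2-q^{-2})A + t\,[C,A]_q\bigr\}, \qquad
[B,\tilde G(t)]_q = Z(t)\bigl\{(q^2-q^{-2})B + t\,[B,C]_q\bigr\},
\]
where I have used $[q+q^{-1},A]_q = (q-q^{-1})(q+q^{-1})A = (q^2-q^{-2})A$ and the analogous identity for $B$.

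Next I would invoke \eqref{eq:alpha} and \eqref{eq:beta} to eliminate the inner $q$-commutators, yielding
\[
[B,C]_q = (q-q^{-1})\alpha - (q^2-q^{-2})A, \qquad
[C,A]_q = (q-q^{-1})\beta - (q^2-q^{-2})B.
\]
Assembling the numerator of \eqref{eq:wpP}, the two $A$-contributions cancel, the $B$-contributions combine into $(1-t^2)(q^2-q^{-2})B$, and the remaining central terms collect as $(q-q^{-1})(\alpha t+\beta t^2)$. Dividing by $(t^2-1)(q^2-q^{-2})^2$ and using $t^2-1 = t(t-t^{-1})$ converts the answer into the form \eqref{eq:wp}. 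An entirely parallel calculation for the numerator of \eqref{eq:wmP} — in which the roles of $A$ and $B$ are interchanged so that now the $B$-terms cancel while the $A$-terms combine as $(1-t^2)(q^2-q^{-2})A$ — produces \eqref{eq:wm}.

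There is no real obstacle here; the derivation is mechanical once Proposition \ref{prop:th} is in hand. The only points needing care are tracking signs of $(1-t^2)$ versus $(t^2-1)$, and interpreting $(t-t^{-1})^{-1}$ as a formal power series in $t$ — the same convention already in use around \eqref{eq:NNNN}--\eqref{eq:N4}. Equivalently, one checks the identities after multiplying numerator and denominator of the right-hand sides of \eqref{eq:wp}, \eqref{eq:wm} by $t$, which puts everything over $(q^2-q^{-2})^2(t^2-1)$ and matches the expression emerging from the commutator calculation directly.
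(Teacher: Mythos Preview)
Your proposal is correct and follows exactly the route the paper takes: substitute \eqref{eq:GC} into \eqref{eq:wpP}, \eqref{eq:wmP}, use centrality of $Z(t)$ to pull it through the $q$-commutators, and then invoke \eqref{eq:alpha}, \eqref{eq:beta} to evaluate $[C,A]_q$ and $[B,C]_q$. The paper's proof is simply the one-line summary of precisely the computation you have written out in detail.
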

\begin{proof} To obtain \eqref{eq:wp}, eliminate $\tilde G(t)$ from \eqref{eq:wpP} using \eqref{eq:GC}, and evaluate the result using
\eqref{eq:alpha}, \eqref{eq:beta}. Equation \eqref{eq:wm} is similarly obtained.
\end{proof}

\noindent Define the generating function
\begin{align*}
G(t) = \sum_{n\in \mathbb N} G_n t^n.
\end{align*}
Using  \eqref{eq:getrid} we obtain
\begin{align}
G(t) = \tilde G(t) + t(q+q^{-1}) \lbrack B, W^-(t)\rbrack.
\label{eq:GP}
\end{align}
\begin{lemma} \label{lem:Ggf} For the algebra $\Delta_q$ we have
\begin{align}
G(t) = Z(t) (q+ q^{-1} + t C'),
\label{eq:Gform}
\end{align}
where $C'$ is from  \eqref{eq:Cp}.
\end{lemma}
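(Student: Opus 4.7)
The plan is to start from the recurrence \eqref{eq:GP}, namely
$G(t)=\tilde G(t)+t(q+q^{-1})\lbrack B,W^-(t)\rbrack$, and substitute the closed-form
expressions already obtained for $\tilde G(t)$ and $W^-(t)$: Proposition~\ref{prop:th} gives
$\tilde G(t) = Z(t)(q+q^{-1}+tC)$, while Lemma~\ref{lem:WWGgf} gives
$W^-(t) = Z(t)\,\bigl[(q-q^{-1})(\alpha t+\beta)-(q^2-q^{-2})(t-t^{-1})A\bigr]/\bigl[(q^2-q^{-2})^2(t-t^{-1})\bigr]$.
Once these are inserted, comparing with the target formula $Z(t)(q+q^{-1}+tC')$ and using the
definition \eqref{eq:Cp} of $C'$ reduces the whole lemma to computing one commutator
$\lbrack B,W^-(t)\rbrack$ and matching it with $Z(t)\,\lbrack A,B\rbrack/(q-q^{-1})$ (up to the
scalar factors $t(q+q^{-1})$ coming from \eqref{eq:GP} and \eqref{eq:Cp}).

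The key observation that makes the computation short is that, by Definition~\ref{def:Z}, the
coefficients of $Z(t)$ lie in the center $\mathbb F\lbrack\alpha,\beta,\gamma,\Omega\rbrack$ of
$\Delta_q$, so $Z(t)$ commutes with every element of $\Delta_q$; similarly $\alpha$ and $\beta$
are central. Hence in
\[
\lbrack B,W^-(t)\rbrack
= \frac{1}{(q^2-q^{-2})^2(t-t^{-1})}\,Z(t)\,\Bigl\lbrack B,\; (q-q^{-1})(\alpha t+\beta)-(q^2-q^{-2})(t-t^{-1})A\Bigr\rbrack
\]
the $\alpha,\beta$ terms drop out, and only $\lbrack B,A\rbrack=-\lbrack A,B\rbrack$ survives.
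The factor $(t-t^{-1})$ cancels, and using $q^2-q^{-2}=(q+q^{-1})(q-q^{-1})$ one finds
$\lbrack B,W^-(t)\rbrack = Z(t)\,\lbrack A,B\rbrack/(q^2-q^{-2})$, whence
$t(q+q^{-1})\lbrack B,W^-(t)\rbrack = tZ(t)\,\lbrack A,B\rbrack/(q-q^{-1})$.

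Adding this to $\tilde G(t)=Z(t)(q+q^{-1}+tC)$ and factoring out $Z(t)$ yields
$G(t)=Z(t)\bigl(q+q^{-1}+t(C+\lbrack A,B\rbrack/(q-q^{-1}))\bigr) = Z(t)(q+q^{-1}+tC')$, as required.
There is no real obstacle here: the argument is essentially a one-line commutator calculation, the
only point requiring care being the bookkeeping of the central factors $Z(t)$, $\alpha$, $\beta$
together with the identity $q^2-q^{-2}=(q+q^{-1})(q-q^{-1})$. All heavy lifting has already been
done in Proposition~\ref{prop:th} (identifying $\tilde G(t)$) and in Lemma~\ref{lem:WWGgf}
(identifying $W^-(t)$), so Lemma~\ref{lem:Ggf} follows immediately from \eqref{eq:GP}.
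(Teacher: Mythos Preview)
Your proof is correct and follows essentially the same approach as the paper: start from \eqref{eq:GP}, substitute the expressions for $\tilde G(t)$ and $W^-(t)$ from Proposition~\ref{prop:th} and Lemma~\ref{lem:WWGgf}, and simplify using the centrality of $Z(t)$, $\alpha$, $\beta$ together with \eqref{eq:Cp}. In fact you spell out the commutator calculation in more detail than the paper does, but the route is identical.
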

\begin{proof} Eliminate $\tilde G(t)$ from \eqref{eq:GP} using
\eqref{eq:GC}.
Eliminate $W^-(t)$ from \eqref{eq:GP} using
\eqref{eq:wm}, and evaluate the result using \eqref{eq:Cp}.
\end{proof}

\noindent Let $s$ denote an indeterminate that commutes with $t$.
\begin{lemma} \label{lem:ad} For the algebra $\Delta_q$ we have
\begin{align*}
&
\lbrack A, W^+(t) \rbrack = \lbrack W^-(t), B \rbrack = t^{-1}(\tilde G(t)-G(t))/(q+q^{-1}),
\\
& 
\lbrack A, G(t) \rbrack_q = \lbrack \tilde G(t), A \rbrack_q = \rho W^-(t)-\rho t W^+(t),
\\
&
\lbrack G(t), B \rbrack_q = \lbrack B, \tilde G(t) \rbrack_q = \rho W^+(t) -\rho t W^-(t),
\\
&
\lbrack  W^-(s), W^-(t) \rbrack = 0, 
\qquad 
\lbrack W^+(s),  W^+(t) \rbrack = 0,
\\
&\lbrack  W^-(s), W^+(t) \rbrack 
+
\lbrack W^+(s), W^-(t) \rbrack = 0,
\\
&s \lbrack  W^-(s), G(t) \rbrack 
+
t \lbrack  G(s),  W^-(t) \rbrack = 0,
\\
&s \lbrack  W^-(s), {\tilde G}(t) \rbrack 
+
t \lbrack  \tilde G(s),  W^-(t) \rbrack = 0,
\\
&s \lbrack   W^+(s),  G(t) \rbrack
+
t \lbrack   G(s), W^+(t) \rbrack = 0,
\\
&s \lbrack   W^+(s), {\tilde G}(t) \rbrack
+
t \lbrack \tilde G(s), W^+(t) \rbrack = 0,
\\
&\lbrack   G(s), G(t) \rbrack = 0, 
\qquad 
\lbrack  {\tilde G}(s),  {\tilde G}(t) \rbrack = 0,
\\
&\lbrack   {\tilde G}(s), G(t) \rbrack +
\lbrack   G(s), {\tilde G}(t) \rbrack = 0,
\end{align*}
where $\rho=-(q^2-q^{-2})^2$.
\end{lemma}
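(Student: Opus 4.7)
The plan is to substitute the explicit closed forms from Proposition \ref{prop:th} and Lemmas \ref{lem:WWGgf}, \ref{lem:Ggf} into each identity and exploit the fact that $Z(t)$, $\alpha$, $\beta$, $\gamma$ all lie in the center of $\Delta_q$. Concretely, each of $\tilde G(t), G(t), W^+(t), W^-(t)$ has the shape
\[
\text{(central element depending on $t$)} + \text{(central scalar depending on $t$)} \cdot X,
\]
where $X$ runs through $C$, $C'$, $B$, $A$ respectively. Moreover, the coefficient of $A$ in $W^-(t)$ and of $B$ in $W^+(t)$ is $-Z(t)/(q^2-q^{-2})$, while the coefficient of $C$ in $\tilde G(t)$ and of $C'$ in $G(t)$ is $tZ(t)$.

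With this structure in hand, several of the identities become nearly formal. The commutators $[G(s),G(t)]$ and $[\tilde G(s),\tilde G(t)]$ vanish because $C$ and $C'$ each commute with themselves; the identity $[\tilde G(s),G(t)] + [G(s),\tilde G(t)] = 0$ reduces after pulling out the central factor $Z(s)Z(t)st$ to the antisymmetry of $[C,C']$. The identities $[W^-(s),W^-(t)] = 0$ and $[W^+(s),W^+(t)] = 0$ follow from $[A,A] = 0$ and $[B,B] = 0$, while $[W^-(s),W^+(t)] + [W^+(s),W^-(t)] = 0$ reduces to $[A,B] + [B,A] = 0$. The four paired identities of the form $s[W^\pm(s),G(t)] + t[G(s),W^\pm(t)] = 0$ (and their $\tilde G$ analogs) collapse because the scalar coefficient of $A$ or $B$ in $W^\pm(t)$ depends on $t$ only through the central factor $Z(t)$; the two terms cancel after pulling out $Z(s)Z(t)st$ and using antisymmetry of $[A,C'], [A,C], [B,C'], [B,C]$.

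The nontrivial content lies in the three identities $[A,W^+(t)] = [W^-(t),B] = t^{-1}(\tilde G(t)-G(t))/(q+q^{-1})$, $[A,G(t)]_q = [\tilde G(t),A]_q = \rho W^-(t) - \rho t W^+(t)$, and $[G(t),B]_q = [B,\tilde G(t)]_q = \rho W^+(t) - \rho t W^-(t)$. For the first pair, the key input is that $\tilde G(t) - G(t) = tZ(t)(C-C') = -tZ(t)[A,B]/(q-q^{-1})$ by the definition \eqref{eq:Cp}, and one then computes $[A,W^+(t)]$ directly from the formula \eqref{eq:wp}, observing that the central summands drop and only the contribution from $[A,B]$ survives. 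For the $q$-commutator identities, one uses \eqref{eq:alpha}--\eqref{eq:gamma} and the $\sigma$-twisted versions \eqref{eq:alphaP}--\eqref{eq:gammaP} to rewrite $[A,C]_q, [C,B]_q, [A,C']_q, [C',B]_q$ as linear combinations of $A, B, \alpha, \beta$. Substituting these into $[A,G(t)]_q = Z(t)\bigl((q^2-q^{-2})A + t[A,C']_q\bigr)$ and comparing to $-Z(t)(q^2-q^{-2})^2\bigl(W^-(t)-tW^+(t)\bigr)/Z(t)$ (after clearing the denominator $(q^2-q^{-2})^2(t-t^{-1})$) yields matching rational functions of $t$.

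The main obstacle is bookkeeping rather than anything conceptual: keeping careful track of the rational-function coefficients in $t$ when verifying the three $q$-commutator identities, and ensuring that the $\sigma$-twisted relations \eqref{eq:alphaP}--\eqref{eq:gammaP} are invoked in the places where $C'$ (rather than $C$) appears, as dictated by Lemma \ref{lem:Ggf}. Once the centrality of $Z(t), \alpha, \beta, \gamma$ is exploited to pull all central factors out of commutators, every remaining verification reduces to an identity between explicit rational functions over $\mathbb F$, which can be checked by clearing the denominator $(q^2-q^{-2})^2(t-t^{-1})$ and comparing polynomials in $t$.
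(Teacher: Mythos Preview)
Your proposal is correct and follows essentially the same approach as the paper: the paper's proof simply says the relations are ``routinely verified using Proposition \ref{prop:th} and Lemmas \ref{lem:WWGgf}, \ref{lem:Ggf} along with \eqref{eq:alpha}, \eqref{eq:beta}, \eqref{eq:alphaP}, \eqref{eq:betaP},'' and you have spelled out exactly how that routine verification goes. Your observation that each of $W^\pm(t)$, $G(t)$, $\tilde G(t)$ has the form (central) $+$ (central scalar)$\cdot X$ with $X\in\{A,B,C,C'\}$ is the organizing principle that makes everything work, and your identification of which relations from \eqref{eq:alpha}--\eqref{eq:betaP} are needed for each $q$-commutator step is accurate (in fact you need only \eqref{eq:alpha}, \eqref{eq:beta}, \eqref{eq:alphaP}, \eqref{eq:betaP}, not \eqref{eq:gamma} or \eqref{eq:gammaP}).
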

\begin{proof} These relations are routinely verified using
Proposition \ref{prop:th} and
Lemmas \ref{lem:WWGgf}, \ref{lem:Ggf} along with
\eqref{eq:alpha}, \eqref{eq:beta}, \eqref{eq:alphaP}, \eqref{eq:betaP}.
\end{proof}

\begin{theorem} In the algebra $\Delta_q$ the
elements \eqref{eq:4gensc} satisfy the relations \eqref{eq:3p1}--\eqref{eq:3p11}.
\end{theorem}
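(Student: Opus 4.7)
The plan is to read the theorem off, coefficient by coefficient, from the generating-function identities collected in Lemma \ref{lem:ad}. Each of the relations \eqref{eq:3p1}--\eqref{eq:3p11} is precisely the coefficient-wise version of one line of that lemma, so once Lemma \ref{lem:ad} is in hand the theorem reduces to tracking indices correctly.

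Before extracting coefficients I would pause to check that the elements \eqref{eq:4gensc} prescribed in $\Delta_q$ by Conjecture \ref{conj:M} (i)--(iii) really are the coefficients of the generating functions $\tilde G(t)$, $W^+(t)$, $W^-(t)$, $G(t)$ used in Section~7. For $\tilde G(t)$, Proposition \ref{prop:th} accomplishes this: the closed form $Z(t)(q+q^{-1}+tC)$ is shown to satisfy \eqref{eq:conjMain}, which is equivalent (by Proposition \ref{prop:combine}) to condition (i). The definitions \eqref{eq:wpP}, \eqref{eq:wmP} are by design the generating-function repackagings of \eqref{eq:WmkA}--\eqref{eq:Wkp1B}, so they encode condition (ii); and \eqref{eq:GP} repackages \eqref{eq:getrid}, encoding condition (iii).

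With the bookkeeping settled, I would extract coefficients one line at a time. The three single-variable identities in Lemma \ref{lem:ad} produce \eqref{eq:3p1}--\eqref{eq:3p3}. For example, $[A, W^+(t)] = t^{-1}(\tilde G(t) - G(t))/(q+q^{-1})$ makes sense because $\tilde G_0 = G_0$ by \eqref{eq:GtG}, so the right side is a genuine power series in $t$; taking the coefficient of $t^k$ gives the first half of \eqref{eq:3p1}. The two-variable identities expand as double power series in $s,t$, and the coefficient of $s^{k}t^{\ell}$ (or $s^{k+1}t^{\ell+1}$, once the scalar factors $s,t$ in front of the brackets are absorbed as index shifts) yields \eqref{eq:3p4}--\eqref{eq:3p11}. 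For instance, $s[W^-(s),G(t)] + t[G(s),W^-(t)] = 0$ at coefficient $s^{k+1}t^{\ell+1}$ gives $[W_{-k}, G_{\ell+1}] + [G_{k+1}, W_{-\ell}] = 0$, which is \eqref{eq:3p6}; the remaining index shifts are analogous.

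The main obstacle is Lemma \ref{lem:ad} itself, which the paper disposes of as ``routinely verified.'' The relations purely among $W^\pm(t)$, $G(t)$, $\tilde G(t)$ reduce, via Proposition \ref{prop:th} and Lemmas \ref{lem:WWGgf}, \ref{lem:Ggf}, to combinatorial identities in the commutative algebra $\mathbb F[\alpha, \beta, \gamma, \Omega, C]$: one pulls the scalar factors $Z(s)$, $Z(t)$ out, and then checks that the rational expressions in $s,t$ multiplying the generators $A,B,C,C'$ combine with the right symmetries under $s \leftrightarrow t$. The ``mixed'' identities involving $A$ or $B$ — those producing \eqref{eq:3p1}--\eqref{eq:3p3} — are the only genuinely nontrivial part: these require rewriting $q$-commutators like $qAB - q^{-1}BA$ via the central element relations \eqref{eq:alpha}--\eqref{eq:gamma} and their $\sigma$-images \eqref{eq:alphaP}--\eqref{eq:gammaP}, using $C' = C + [A,B]/(q-q^{-1})$ from \eqref{eq:Cp} to pass between the $\tilde G$ and $G$ sides. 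Once those substitutions are made, the $A$- and $B$-dependence organizes into the desired form and only manipulations in the commutative center remain.
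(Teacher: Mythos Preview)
Your proposal is correct and takes essentially the same approach as the paper: the paper's proof is the one-liner ``This is a routine consequence of Lemma~\ref{lem:ad},'' and you have simply spelled out that routine in detail, correctly tracking the coefficient extraction (including the index shifts from the $t^{-1}$ and the $s,t$ prefactors) and correctly noting that Proposition~\ref{prop:th} together with \eqref{eq:wpP}, \eqref{eq:wmP}, \eqref{eq:GP} identifies the generating-function coefficients with the elements~\eqref{eq:4gensc}. Your closing discussion of how Lemma~\ref{lem:ad} itself is verified goes beyond what is needed for this theorem, but matches the paper's hint that \eqref{eq:alpha}, \eqref{eq:beta}, \eqref{eq:alphaP}, \eqref{eq:betaP} are the key inputs.
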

\begin{proof} This is a routine consequence of  Lemma
 \ref{lem:ad}.
\end{proof}

\section{Comments}

\noindent In the previous section we gave some supporting evidence for Conjecture \ref{conj:M}. In this section we assume that Conjecture \ref{conj:M} is correct,
and provide more information about how the elements \eqref{eq:4gensc} are related to the elements \eqref{eq:Upbw}. We will give a variation on \eqref{eq:WmkA}--\eqref{eq:Wkp1B}. 
\medskip

\noindent Using Appendix A and $B_{\delta} = q^{-2}W_1W_0 - W_0 W_1$ we obtain
\begin{align}
{\tilde G}_1 = -q B_\delta = \lbrack W_0, W_1\rbrack_q.
\label{eq:G1}
\end{align}

\begin{lemma} \label{lem:GWcom} For $k \in \mathbb N$,
\begin{enumerate}
\item[\rm (i)] $\lbrack {\tilde G}_{k+1}, W_0\rbrack_q = (q-q^{-1})W_0 {\tilde G}_{k+1} -q^2 \lbrack B_\delta, W_{-k} \rbrack$,
\item[\rm (ii)] $\lbrack W_1, {\tilde G}_{k+1} \rbrack_q = (q-q^{-1})W_1 {\tilde G}_{k+1} +\lbrack B_\delta, W_{k+1} \rbrack$.
\end{enumerate}
\end{lemma}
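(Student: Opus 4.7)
The plan is to observe that both identities become transparent once we pass between the $q$-commutator and the ordinary commutator, and then invoke two of the Baseilhac-Belliard relations \eqref{eq:3p7}, \eqref{eq:3p9} with index $\ell=0$, which we may use since we are assuming Conjecture \ref{conj:M}.

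First I would rewrite each $q$-commutator in the statement as an ordinary commutator plus a correction term. Using $[X,Y]_q = q XY - q^{-1}YX = q[X,Y] + (q-q^{-1})YX$, we get
\begin{align*}
\lbrack \tilde G_{k+1}, W_0 \rbrack_q &= q \lbrack \tilde G_{k+1}, W_0 \rbrack + (q-q^{-1}) W_0 \tilde G_{k+1},\\
\lbrack W_1, \tilde G_{k+1}\rbrack_q &= q\lbrack W_1, \tilde G_{k+1}\rbrack + (q-q^{-1}) \tilde G_{k+1} W_1.
\end{align*}
A moment's algebra (the second line also uses that $[W_1,\tilde G_{k+1}]$ already contains $W_1 \tilde G_{k+1}$ in the right form) reduces the two claimed identities to
\begin{align*}
\lbrack \tilde G_{k+1}, W_0 \rbrack = -q \lbrack B_\delta, W_{-k}\rbrack,
\qquad\qquad
\lbrack W_1, \tilde G_{k+1}\rbrack = q\lbrack B_\delta, W_{k+1}\rbrack.
\end{align*}

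Next I would substitute $B_\delta = -q^{-1} \tilde G_1$, which is just a restatement of \eqref{eq:G1}. The two identities become
\begin{align*}
\lbrack \tilde G_{k+1}, W_0\rbrack = \lbrack \tilde G_1, W_{-k}\rbrack,
\qquad\qquad
\lbrack \tilde G_{k+1}, W_1\rbrack = \lbrack \tilde G_1, W_{k+1}\rbrack.
\end{align*}
But these are exactly the specializations $\ell=0$ of the Baseilhac-Belliard relations \eqref{eq:3p7} and \eqref{eq:3p9} respectively, rewritten by antisymmetry of the commutator. Since we are assuming Conjecture \ref{conj:M}, the elements \eqref{eq:4gensc} satisfy all of \eqref{eq:3p1}--\eqref{eq:3p11}, so both identities hold and the lemma follows.

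There is essentially no obstacle here: the proof is a bookkeeping exercise converting between $[\cdot,\cdot]_q$ and $[\cdot,\cdot]$, together with the substitution $\tilde G_1 = -qB_\delta$. The only point requiring care is getting the signs and powers of $q$ right in the conversion, and checking that the two sides indeed match the appropriate $\ell=0$ specialization of the correct one among the nine Baseilhac-Belliard cross relations (not, for example, the superficially similar \eqref{eq:3p2}--\eqref{eq:3p3}, which would produce a $W_{-k-1}$ or $W_{k+2}$ rather than the $W_{-k}$, $W_{k+1}$ required here).
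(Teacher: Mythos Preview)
Your proof is correct and follows essentially the same approach as the paper: decompose the $q$-commutator into an ordinary commutator plus a residual term, then identify the ordinary commutator using the $\ell=0$ case of \eqref{eq:3p7} (respectively \eqref{eq:3p9}) together with $\tilde G_1 = -qB_\delta$. The only cosmetic difference is that for part (ii) the paper uses the alternative decomposition $[X,Y]_q = (q-q^{-1})XY - q^{-1}[Y,X]$ directly, whereas you start from $[X,Y]_q = q[X,Y] + (q-q^{-1})YX$ and then reshuffle to get $(q-q^{-1})W_1\tilde G_{k+1}$ on the right; this amounts to the same computation.
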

\begin{proof} (i) Observe that
\begin{align*}
\lbrack {\tilde G}_{k+1}, W_0 \rbrack_q = (q-q^{-1}) W_0 {\tilde G}_{k+1}+q \lbrack {\tilde G}_{k+1}, W_0\rbrack.
\end{align*}
By \eqref{eq:3p7} and \eqref{eq:G1},
\begin{align*}
\lbrack {\tilde G}_{k+1}, W_0\rbrack = \lbrack {\tilde G}_1, W_{-k} \rbrack = -q \lbrack B_\delta, W_{-k} \rbrack.
\end{align*}
\noindent The result follows.
\\
\noindent (ii) Observe that
\begin{align*}
\lbrack W_1, {\tilde G}_{k+1} \rbrack_q = (q-q^{-1}) W_1 {\tilde G}_{k+1}-q^{-1} \lbrack {\tilde G}_{k+1}, W_1\rbrack.
\end{align*}
By \eqref{eq:3p9} and \eqref{eq:G1},
\begin{align*}
\lbrack {\tilde G}_{k+1}, W_1\rbrack = \lbrack {\tilde G}_1, W_{k+1} \rbrack = -q \lbrack B_\delta, W_{k+1} \rbrack.
\end{align*}
\noindent The result follows.
\end{proof}
\begin{lemma}\label{lem:Wind}
For $n\geq 1$,
\begin{align}
\label{eq:Wind1}
W_{-n} &= W_n -\frac{(q-q^{-1}) W_0 {\tilde G}_n}{(q^2-q^{-2})^2} + \frac{q^2 \lbrack B_\delta, W_{1-n}\rbrack}{(q^2-q^{-2})^2},
\\
W_{n+1} &=W_{1-n}-\frac{(q-q^{-1}) W_1 {\tilde G}_n}{(q^2-q^{-2})^2} -\frac{\lbrack B_\delta, W_n\rbrack}{(q^2-q^{-2})^2}.
\label{eq:Wind2}
\end{align}
\end{lemma}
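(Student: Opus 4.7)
The plan is to reduce both identities to telescoping differences of the closed-form expressions \eqref{eq:WmkA}--\eqref{eq:Wkp1B}, and then apply Lemma \ref{lem:GWcom} to convert the resulting $q$-commutators into the $B_\delta$-commutator form on the right-hand sides of \eqref{eq:Wind1}, \eqref{eq:Wind2}.

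The first step is to establish the intermediate identities
\[
W_n - W_{-n} = \frac{\lbrack \tilde G_n, W_0 \rbrack_q}{(q^2-q^{-2})^2}, \qquad W_{n+1} - W_{1-n} = -\frac{\lbrack W_1, \tilde G_n \rbrack_q}{(q^2-q^{-2})^2}
\]
by a direct comparison of \eqref{eq:WmkA}--\eqref{eq:Wkp1B}. I would split into parities. For $n = 2r+1$ odd, \eqref{eq:WmkA} describes $W_{-n}$ while \eqref{eq:Wkp1B} (with $k = 2r$) describes $W_n$; the two expansions agree term-by-term except that the sum over $\lbrack \tilde G_{2\ell+1}, W_0 \rbrack_q$-contributions runs one index further in $W_{-n}$, and a single residual commutator yields the first identity. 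The even case $n = 2r$ is handled by the parallel pairing of \eqref{eq:WmkB} with \eqref{eq:Wkp1A}. A symmetric telescoping between $W_{n+1}$ and $W_{1-n}$, using \eqref{eq:Wkp1A} vs.\ \eqref{eq:WmkB} for odd $n$ and \eqref{eq:Wkp1B} vs.\ \eqref{eq:WmkA} for even $n$, gives the second identity.

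Finally I would invoke Lemma \ref{lem:GWcom}(i) with $k = n-1$, for which $W_{-k} = W_{1-n}$, to replace $\lbrack \tilde G_n, W_0 \rbrack_q$ by $(q-q^{-1}) W_0 \tilde G_n - q^2 \lbrack B_\delta, W_{1-n}\rbrack$, and Lemma \ref{lem:GWcom}(ii) with $k = n-1$, for which $W_{k+1} = W_n$, to replace $\lbrack W_1, \tilde G_n \rbrack_q$ by $(q-q^{-1}) W_1 \tilde G_n + \lbrack B_\delta, W_n \rbrack$. Substituting into the intermediate identities and solving for $W_{-n}$ and $W_{n+1}$ yields \eqref{eq:Wind1} and \eqref{eq:Wind2}. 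No substantive obstacle is expected; the only nuisance is the parity case-split, plus a direct check of the small cases $n = 1, 2$, where certain sums are empty and $W_{1-n}$ degenerates to $W_0$ or $W_{-1}$ and must be reconciled with the explicit expressions displayed in the text preceding \eqref{eq:WmkA}.
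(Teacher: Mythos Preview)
Your argument is correct, and the final step---invoking Lemma~\ref{lem:GWcom} with $k=n-1$ and substituting---is exactly what the paper does. The difference lies only in how you obtain the intermediate identities
\[
W_n - W_{-n} = \frac{[\tilde G_n, W_0]_q}{(q^2-q^{-2})^2},
\qquad
W_{n+1} - W_{1-n} = -\frac{[W_1, \tilde G_n]_q}{(q^2-q^{-2})^2}.
\]
You derive these by telescoping the closed forms \eqref{eq:WmkA}--\eqref{eq:Wkp1B}, which forces a parity case-split and separate small-$n$ checks. The paper instead notes that these two identities are precisely the right-hand equations in \eqref{eq:3p2} and \eqref{eq:3p3} (with $k=n-1$, and recalling $\rho = -(q^2-q^{-2})^2$), which are available directly since Section~8 operates under the standing assumption that Conjecture~\ref{conj:M} holds. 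Your route is self-contained from the explicit formulas but redundant: \eqref{eq:WmkA}--\eqref{eq:Wkp1B} were themselves obtained from \eqref{eq:3p2}, \eqref{eq:3p3}, so you are effectively undoing and redoing that derivation. The paper's one-line citation buys the same intermediate identities with no case analysis.
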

\begin{proof} Use the equations on the right in
\eqref{eq:3p2}, \eqref{eq:3p3} along with Lemma  \ref{lem:GWcom}.
\end{proof}
\noindent 
We recall some notation from \cite{BK}. For a negative integer $k$ define
\begin{align*}
B_{k \delta + \alpha_0} = B_{(-k-1)\delta + \alpha_1}, \qquad \qquad 
B_{k\delta+\alpha_1} = B_{(-k-1)\delta+\alpha_0}.
\end{align*}
We have
\begin{align}
 B_{r\delta+\alpha_0} = B_{s\delta+\alpha_1} \qquad \qquad (r,s \in \mathbb Z, \quad r+s=-1).
 \label{eq:extend}
 \end{align}

\begin{lemma} For $n\in \mathbb Z$,
\begin{align}
\label{eq:nnot1}
\frac{q\lbrack B_\delta, B_{n\delta+\alpha_0} \rbrack}{(q-q^{-1})(q^2-q^{-2})}&= B_{(n+1) \delta +\alpha_0} - B_{(n-1)\delta +\alpha_0},
\\
\label{eq:nnot2}
\frac{q\lbrack B_\delta, B_{n\delta+\alpha_1} \rbrack}{(q-q^{-1})(q^2-q^{-2})}&= B_{(n-1)\delta +\alpha_1} - B_{(n+1)\delta +\alpha_1}.
\end{align}
\end{lemma}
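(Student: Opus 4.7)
The plan is to prove \eqref{eq:nnot1} and \eqref{eq:nnot2} by cases on the sign of $n$, with the extension convention \eqref{eq:extend} being the bridge between the two identities. The proof is essentially a reindexing exercise on the Baseilhac--Kolb recursions \eqref{eq:line1}--\eqref{eq:line4}.

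First I would handle the range $n\geq 1$. Here \eqref{eq:nnot1} is just the recursion \eqref{eq:line2}: solve \eqref{eq:line2} for the $q$-commutator and substitute $n\mapsto n+1$ to obtain \eqref{eq:nnot1} at every $n\geq 1$. Identity \eqref{eq:nnot2} is handled identically using \eqref{eq:line4}, noting the sign flip built into that recursion. Next I would treat $n=0$. From \eqref{eq:extend} with $(r,s)=(-1,0)$ we have $B_{-\delta+\alpha_0}=B_{\alpha_1}=W_1$, so \eqref{eq:nnot1} at $n=0$ reads
\[
\frac{q\lbrack B_\delta, W_0\rbrack}{(q-q^{-1})(q^2-q^{-2})} = B_{\delta+\alpha_0}-W_1,
\]
which is precisely \eqref{eq:line1} rearranged. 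The analogous computation using $B_{-\delta+\alpha_1}=B_{\alpha_0}=W_0$ together with \eqref{eq:line3} disposes of \eqref{eq:nnot2} at $n=0$.

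Finally I would handle $n\leq -1$ by writing $n=-m$ with $m\geq 1$ and applying \eqref{eq:extend} to each of the three basis elements $B_{n\delta+\alpha_0}$, $B_{(n+1)\delta+\alpha_0}$, $B_{(n-1)\delta+\alpha_0}$ appearing in \eqref{eq:nnot1}. One checks that the three corresponding pairs $(r,s)$ are $(-m,m-1)$, $(-m+1,m-2)$, $(-m-1,m)$, so \eqref{eq:nnot1} at index $-m$ is converted termwise into
\[
\frac{q\lbrack B_\delta, B_{(m-1)\delta+\alpha_1}\rbrack}{(q-q^{-1})(q^2-q^{-2})} = B_{(m-2)\delta+\alpha_1} - B_{m\delta+\alpha_1},
\]
which is exactly \eqref{eq:nnot2} at the already-established nonnegative index $m-1$. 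The symmetric argument reduces \eqref{eq:nnot2} for negative $n$ to \eqref{eq:nnot1} for nonnegative $n$.

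There is no substantive obstacle: the proof is a bookkeeping exercise, and the only point requiring mild care is matching the sign conventions between the $\alpha_0$- and $\alpha_1$-recursions (note that \eqref{eq:nnot2} has its $\pm$ swapped relative to \eqref{eq:nnot1}, which is consistent with the sign swap between \eqref{eq:line2} and \eqref{eq:line4}, and also with the index reversal implicit in \eqref{eq:extend}). Consequently the statement follows uniformly for all $n\in\mathbb{Z}$.
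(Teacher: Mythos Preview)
Your proof is correct and follows exactly the approach the paper indicates: the paper's proof is the one-line ``Use \eqref{eq:line1}--\eqref{eq:line4} and \eqref{eq:extend},'' and your case split on the sign of $n$ is precisely the unpacking of that sentence. Your bookkeeping (the index shift $n\mapsto n+1$ for $n\geq 1$, the use of \eqref{eq:extend} at $n=0$, and the reduction of negative $n$ to the other identity at nonnegative index $m-1$) is all correct.
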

\begin{proof} Use \eqref{eq:line1}--\eqref{eq:line4} and \eqref{eq:extend}.
\end{proof}

\begin{proposition}
\label{prop:WWalt} For $n \in \mathbb N$ the following hold in $\mathcal O_q$:
\begin{align}
\label{eq:recWnm1}
W_{-n} &=-(q-q^{-1})^{-1} \sum_{k=0}^n \sum_{\ell=0}^k \binom{k}{\ell} q^{k-2 \ell} \lbrack 2 \rbrack^{-k-2}_q B_{(k-2\ell)\delta + \alpha_0} {\tilde G}_{n-k},
\\
W_{n+1} &=  -(q-q^{-1})^{-1}  \sum_{k=0}^n \sum_{\ell=0}^k \binom{k}{\ell} q^{2 \ell-k} \lbrack 2 \rbrack^{-k-2}_q B_{(k-2\ell)\delta + \alpha_1} {\tilde G}_{n-k}.
\label{eq:recWnm2}
\end{align}
\end{proposition}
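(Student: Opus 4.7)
\medskip
\noindent
\textbf{Proof proposal.}
The plan is to establish \eqref{eq:recWnm1} and \eqref{eq:recWnm2} together by induction on $n$. For the base case $n=0$, each right-hand side collapses to a single term; using $\tilde G_0 = -(q-q^{-1})\lbrack 2\rbrack_q^2$ together with $B_{\alpha_0}=W_0$ and $B_{\alpha_1}=W_1$ one checks directly that \eqref{eq:recWnm1}$|_{n=0}$ yields $W_0$ and \eqref{eq:recWnm2}$|_{n=0}$ yields $W_1$.

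For the inductive step, I would apply Lemma \ref{lem:Wind} with $n$ replaced by $n+1$ to write
\[
W_{-(n+1)} = W_{n+1} - \frac{(q-q^{-1})W_0 \tilde G_{n+1}}{(q^2-q^{-2})^2} + \frac{q^2 \lbrack B_\delta, W_{-n}\rbrack}{(q^2-q^{-2})^2},
\]
and the analogous identity for $W_{n+2}$, then substitute the inductive formulas for $W_{n+1}$ and $W_{-n}$. A crucial point is that each $\tilde G_{n-k}$ commutes with $B_\delta$: under Conjecture \ref{conj:M} the elements $\lbrace \tilde G_i\rbrace$ lie in the commutative subalgebra of $\mathcal O_q$ generated by $\lbrace B_{m\delta}\rbrace_{m=1}^\infty$ (see Definition \ref{def:qexpC}), so $B_\delta$ passes through the $\tilde G$-factor and acts only on the $B_{(k-2\ell)\delta+\alpha_0}$ factor. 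Applying \eqref{eq:nnot1} then replaces each $\lbrack B_\delta, B_{m\delta+\alpha_0}\rbrack$ by the telescoping combination $q^{-1}(q-q^{-1})(q^2-q^{-2})(B_{(m+1)\delta+\alpha_0} - B_{(m-1)\delta+\alpha_0})$, up to an explicit scalar.

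After pulling out a common factor $\lbrack 2\rbrack_q^{-k-3}$ and collecting like powers of $B_{m\delta+\alpha_0}$, the coefficient of each $B_{m\delta+\alpha_0}\tilde G_{n-k}$ takes the form $q^{m\pm 1}\bigl[\binom{k}{a-1}+\binom{k}{a}\bigr]$, which by Pascal's rule collapses to $q^{m\pm 1}\binom{k+1}{a}$. Reindexing via $K = k+1$, and absorbing the $W_0 \tilde G_{n+1}$ (respectively $W_1 \tilde G_{n+1}$) piece of the recursion as the missing $K=0$ contribution, yields exactly \eqref{eq:recWnm1} (resp.\ \eqref{eq:recWnm2}) at level $n+1$. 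The second identity is handled identically, using \eqref{eq:nnot2} and invoking the extension convention \eqref{eq:extend} to re-express the $\alpha_1$-indexed $B$-elements as $\alpha_0$-indexed ones with the index shifted by $-1$. I expect the main obstacle to be purely bookkeeping: carefully tracking the change of summation variable $m = k-2\ell$, the accompanying powers $q^{k-2\ell\pm 1}$ and $\lbrack 2 \rbrack_q^{-k-\cdot}$, and the signs produced by $(q^2-q^{-2})^{-2}$ versus $(q-q^{-1})^{-1}\lbrack 2\rbrack_q^{-2}$, so that the Pascal collapse aligns cleanly with the templates on the right of \eqref{eq:recWnm1}--\eqref{eq:recWnm2}. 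No conceptual obstacle remains once the inductive framework and the commutativity of $\tilde G_i$ with $B_\delta$ are in hand.
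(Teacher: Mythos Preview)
Your proposal is correct and follows essentially the same approach as the paper: induction on $n$ using Lemma \ref{lem:Wind}, the commutator formulas \eqref{eq:nnot1}--\eqref{eq:nnot2}, the extension convention \eqref{eq:extend}, and Pascal's identity to collapse the binomial coefficients. One small correction: \eqref{eq:extend} is needed for \emph{both} identities, not just the second, since substituting the inductive formula for $W_{n+1}$ into the recursion for $W_{-(n+1)}$ introduces $\alpha_1$-indexed $B$-elements that must be converted to $\alpha_0$-indexed ones (and symmetrically for the other identity).
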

\begin{proof} We use induction on $n$.
First assume that $n=0$. Then \eqref{eq:recWnm1}, \eqref{eq:recWnm2} hold. Next assume that $n\geq 1$.
To obtain \eqref{eq:recWnm1}, evaluate the right-hand side of 
 \eqref{eq:Wind1} 
 using induction along with \eqref{eq:extend}, \eqref{eq:nnot1}.
 To obtain \eqref{eq:recWnm2}, evaluate the right-hand side of 
 \eqref{eq:Wind2}  
  using induction along with \eqref{eq:extend}, \eqref{eq:nnot2}.
\end{proof}
\noindent In Appendix B we display \eqref{eq:recWnm1}, \eqref{eq:recWnm2} in detail for $0 \leq n \leq 7$.
\medskip

\noindent Referring to \eqref{eq:recWnm1} and \eqref{eq:recWnm2}, if we express each term ${\tilde G}_{n-k}$ as a polynomial in $B_\delta, B_{2\delta}, \ldots, B_{(n-k)\delta}$ using \eqref{eq:recBG},
then we effectively write $W_{-n}$ and $W_{n+1}$ in the PBW basis for $\mathcal O_q$ given in Lemma \ref{prop:damiani}.
Unfortunately the resulting formula are not pleasant.

\section{Acknowledgment} The author is deeply grateful to Travis Scrimshaw for performing the computer checks mentioned at the beginning of Section 7. The author thanks Pascal Baseilhac and Nicolas Cramp{\'e} for
giving this paper a close reading and offering valuable comments. The author thanks the referee for giving detailed instructions about how to improve several aspects of the paper.

\section{Appendix A}
For  the $q$-Onsager algebra $\mathcal O_q$ we use
 \eqref{eq:recBG} to obtain
$\tilde G_1, \tilde G_2, \ldots, \tilde G_8$ in terms of
$B_\delta, B_{2\delta}, \ldots, B_{8\delta}$.
\medskip

\noindent
Recall that
\begin{align*}
B_{0 \delta} = q^{-2}-1,
\qquad \qquad 
 {\tilde G}_0 = - (q-q^{-1}) \lbrack 2 \rbrack^2_q.
\end{align*}
\noindent $ {\tilde G}_1$ satisfies 
\bigskip

\centerline{
0 = 
\begin{tabular}[t]{c|cc}
    & $\lbrack 2 \rbrack_q B_{0\delta}$ & $\lbrack 1 \rbrack_q B_{1 \delta}$ 
\\
\hline
$ {\tilde G}_0$ & $0$ & $1$
\\
$\lbrack 2 \rbrack_q  {\tilde G}_1$ & $1$ & $0$
\\
\end{tabular}}

\newpage
\noindent $ {\tilde G}_2$ satisfies 
\bigskip

\centerline{
0 = 
\begin{tabular}[t]{c|ccc}
    &
    $\lbrack 4 \rbrack_q B_{0 \delta}$ &
    $\lbrack 3 \rbrack_q B_{1 \delta}$ &
    $\lbrack 2 \rbrack_q B_{2 \delta}$ 
\\
\hline
$ {\tilde G}_0$ & $0$ & $0$ & $1$
\\
$\lbrack 2 \rbrack_q  {\tilde G}_1$ & $0$ & $1$ & $0$
\\
$\lbrack 2 \rbrack^2_q  {\tilde G}_2$ & $1$ & $0$ & $0$
\end{tabular}}

\noindent $ {\tilde G}_3$ satisfies 
\bigskip

\centerline{
0 = 
\begin{tabular}[t]{c|cccc}
    &
    $\lbrack 6 \rbrack_q B_{0 \delta}$ &
    $\lbrack 5 \rbrack_q B_{1 \delta}$ &
    $\lbrack 4 \rbrack_q B_{2 \delta}$ &
    $\lbrack 3 \rbrack_q B_{3 \delta}$ 
\\
\hline
${\tilde G}_0$ & $0$ & $0$ & $0$ & $1$
\\
$\lbrack 2 \rbrack_q  {\tilde G}_1$ & $-1$ & $0$ & $1$ & $0$
\\
$\lbrack 2 \rbrack^2_q  {\tilde G}_2$ & $0$ & $1$ & $0$ & $0$
\\
$\lbrack 2 \rbrack^3_q {\tilde G}_3$ & $1$ & $0$ & $0$ & $0$
\end{tabular}}

\noindent $ {\tilde G}_4$ satisfies 
\bigskip

\centerline{
0 = 
\begin{tabular}[t]{c|ccccc}
    &
    $\lbrack 8 \rbrack_q B_{0 \delta}$ &
    $\lbrack 7 \rbrack_q B_{1 \delta}$ &
    $\lbrack 6 \rbrack_q B_{2 \delta}$ &
    $\lbrack 5 \rbrack_q B_{3 \delta}$ & 
    $\lbrack 4 \rbrack_q B_{4 \delta}$  
\\
\hline
$ {\tilde G}_0$ & $0$ & $0$ & $0$ & $0$ & $1$
\\
$\lbrack 2 \rbrack_q  {\tilde G}_1$ & $0$ & $-1$ & $0$ & $1$ & $0$
\\
$\lbrack 2 \rbrack^2_q {\tilde G}_2$ & $-2$ & $0$ & $1$ & $0$ & $0$
\\
$\lbrack 2 \rbrack^3_q {\tilde G}_3$ & $0$ &  $1$ & $0$ & $0$ & $0$
\\
$\lbrack 2 \rbrack^4_q {\tilde G}_4$ & $1$ & $0$ & $0$ & $0$ & $0$
\end{tabular}}

\bigskip

\noindent $ {\tilde G}_5$ satisfies 
\bigskip

\centerline{
0 = 
\begin{tabular}[t]{c|cccccc}
    &
    $\lbrack 10 \rbrack_q B_{0 \delta}$ &
    $\lbrack 9 \rbrack_q B_{1 \delta}$ &
    $\lbrack 8 \rbrack_q B_{2 \delta}$ &
    $\lbrack 7 \rbrack_q B_{3 \delta}$ & 
    $\lbrack 6 \rbrack_q B_{4 \delta}$ &
    $\lbrack 5 \rbrack_q B_{5 \delta}$  
\\
\hline
${\tilde G}_0$ & $0$&  $0$ & $0$ & $0$ & $0$ & $1$
\\
$\lbrack 2 \rbrack_q  {\tilde G}_1$ & $1$ & $0$ & $-1$ & $0$ & $1$ & $0$
\\
$\lbrack 2 \rbrack^2_q  {\tilde G}_2$ & $0$ & $-2$ & $0$ & $1$ & $0$ & $0$
\\
$\lbrack 2 \rbrack^3_q {\tilde G}_3$ & $-3$ & $0$ &  $1$ & $0$ & $0$ & $0$
\\
$\lbrack 2 \rbrack^4_q {\tilde G}_4$ &$0$ & $1$ & $0$ & $0$ & $0$ & $0$
\\
$\lbrack 2 \rbrack^5_q {\tilde G}_5$ & $1$ & $0$&  $0$ & $0$ & $0$ & $0$
\end{tabular}}

\bigskip

\noindent $ {\tilde G}_6$ satisfies 
\bigskip

\centerline{
0 = 
\begin{tabular}[t]{c|ccccccc}
    &
    $\lbrack 12 \rbrack_q B_{0 \delta}$ &
    $\lbrack 11 \rbrack_q B_{1 \delta}$ &
    $\lbrack 10 \rbrack_q B_{2 \delta}$ &
    $\lbrack 9 \rbrack_q B_{3 \delta}$ & 
    $\lbrack 8 \rbrack_q B_{4 \delta}$ &
    $\lbrack 7 \rbrack_q B_{5 \delta}$ &
    $\lbrack 6 \rbrack_q B_{6 \delta}$  
\\
\hline
$ {\tilde G}_0$ & $0$& $0$&  $0$ & $0$ & $0$ & $0$ & $1$
\\
$\lbrack 2 \rbrack_q {\tilde G}_1$ & $0$& $1$ & $0$ & $-1$ & $0$ & $1$ & $0$
\\
$\lbrack 2 \rbrack^2_q  {\tilde G}_2$ & $3$& $0$ & $-2$ & $0$ & $1$ & $0$ & $0$
\\
$\lbrack 2 \rbrack^3_q {\tilde G}_3$ &$0$& $-3$ & $0$ &  $1$ & $0$ & $0$ & $0$
\\
$\lbrack 2 \rbrack^4_q {\tilde G}_4$  & $-4$& $0$ & $1$ & $0$ & $0$ & $0$ & $0$
\\
$\lbrack 2 \rbrack^5_q {\tilde G}_5$ &$0$& $1$ & $0$&  $0$ & $0$ & $0$ & $0$
\\
$\lbrack 2 \rbrack^6_q {\tilde G}_6$ & $1$ & $0$& $0$&  $0$ & $0$ & $0$ & $0$
\end{tabular}}

\bigskip

\newpage
\noindent $ {\tilde G}_7$ satisfies 
\bigskip

\centerline{
0 = 
\begin{tabular}[t]{c|cccccccc}
    &
    $\lbrack 14 \rbrack_q B_{0 \delta}$ &
    $\lbrack 13 \rbrack_q B_{1 \delta}$ &
    $\lbrack 12 \rbrack_q B_{2 \delta}$ &
    $\lbrack 11 \rbrack_q B_{3 \delta}$ & 
    $\lbrack 10 \rbrack_q B_{4 \delta}$ &
    $\lbrack 9 \rbrack_q B_{5 \delta}$ &
    $\lbrack 8 \rbrack_q B_{6 \delta}$ &
    $\lbrack 7 \rbrack_q B_{7 \delta}$  
\\
\hline
$ {\tilde G}_0$ & $0$& $0$& $0$&  $0$ & $0$ & $0$ & $0$ & $1$
\\
$\lbrack 2 \rbrack_q  {\tilde G}_1$ & $-1$& $0$& $1$ & $0$ & $-1$ & $0$ & $1$ & $0$
\\
$\lbrack 2 \rbrack^2_q  {\tilde G}_2$ & $0$&  $3$& $0$ & $-2$ & $0$ & $1$ & $0$ & $0$
\\
$\lbrack 2 \rbrack^3_q {\tilde G}_3$  &$6$& $0$& $-3$ & $0$ &  $1$ & $0$ & $0$ & $0$
\\
$\lbrack 2 \rbrack^4_q  {\tilde G}_4$  &$0$& $-4$& $0$ & $1$ & $0$ & $0$ & $0$ & $0$
\\
$\lbrack 2 \rbrack^5_q {\tilde G}_5$ &$-5$& $0$& $1$ & $0$&  $0$ & $0$ & $0$ & $0$
\\
$\lbrack 2 \rbrack^6_q {\tilde G}_6$ & $0$ & $1$ & $0$& $0$&  $0$ & $0$ & $0$ & $0$
\\
$\lbrack 2 \rbrack^7_q {\tilde G}_7$ & $1$ & $0$& $0$& $0$&  $0$ & $0$ & $0$ & $0$
\end{tabular}}

\bigskip

\noindent ${\tilde G}_8$ satisfies $0=$
\bigskip

\centerline{
\begin{tabular}[t]{c|ccccccccc}
    &
    $\lbrack 16 \rbrack_q B_{0 \delta}$ &
    $\lbrack 15 \rbrack_q B_{1 \delta}$ &
    $\lbrack 14 \rbrack_q B_{2 \delta}$ &
    $\lbrack 13 \rbrack_q B_{3 \delta}$ & 
    $\lbrack 12 \rbrack_q B_{4 \delta}$ &
    $\lbrack 11 \rbrack_q B_{5 \delta}$ &
    $\lbrack 10 \rbrack_q B_{6 \delta}$ &
    $\lbrack 9 \rbrack_q B_{7 \delta}$ &
    $\lbrack 8 \rbrack_q B_{8 \delta}$  
\\
\hline
$ {\tilde G}_0$ & $0$& $0$& $0$& $0$&  $0$ & $0$ & $0$ & $0$ & $1$
\\
$\lbrack 2 \rbrack_q  {\tilde G}_1$ & $0$& $-1$& $0$& $1$ & $0$ & $-1$ & $0$ & $1$ & $0$
\\
$\lbrack 2 \rbrack^2_q {\tilde G}_2$ & $-4$& $0$&  $3$& $0$ & $-2$ & $0$ & $1$ & $0$ & $0$
\\
$\lbrack 2 \rbrack^3_q  {\tilde G}_3$  &$0$& $6$& $0$& $-3$ & $0$ &  $1$ & $0$ & $0$ & $0$
\\
$\lbrack 2 \rbrack^4_q  {\tilde G}_4$  &$10$& $0$& $-4$& $0$ & $1$ & $0$ & $0$ & $0$ & $0$
\\
$\lbrack 2 \rbrack^5_q  {\tilde G}_5$ &$0$& $-5$& $0$& $1$ & $0$&  $0$ & $0$ & $0$ & $0$
\\
$\lbrack 2 \rbrack^6_q  {\tilde G}_6$ & $-6$& $0$ & $1$ & $0$& $0$&  $0$ & $0$ & $0$ & $0$
\\
$\lbrack 2 \rbrack^7_q  {\tilde G}_7$ & $0$& $1$ & $0$& $0$& $0$&  $0$ & $0$ & $0$ & $0$
\\
$\lbrack 2 \rbrack^8_q  {\tilde G}_8$ & $1$ & $0$& $0$& $0$& $0$&  $0$ & $0$ & $0$ & $0$
\end{tabular}}

\bigskip

\section{Appendix B}
For the $q$-Onsager algebra $\mathcal O_q$ we use  \eqref{eq:recWnm1}, \eqref{eq:recWnm2} 
to obtain  $\lbrace W_{-n}\rbrace_{n=0}^7$ and $\lbrace W_{n+1}\rbrace_{n=0}^7$ in terms of $\lbrace B_{n\delta+\alpha_0} \rbrace_{n=0}^7$, $\lbrace B_{n\delta+\alpha_1} \rbrace_{n=0}^7$, 
 $\lbrace \tilde G_{n} \rbrace_{n=0}^7$. Recall that $\tilde G_0= -(q-q^{-1}) \lbrack 2 \rbrack^2_q$.
 
  \bigskip

\noindent We have
\medskip

\noindent $W_0= B_{\alpha_0}=
-(q-q^{-1})^{-1}\lbrack 2 \rbrack^{-2}_q
B_{\alpha_0}\tilde G_0 $.
\bigskip

\noindent $W_{-1}$ is equal to 
$-(q-q^{-1})^{-1}\lbrack 2 \rbrack^{-3}_q$ times
\bigskip

\centerline{
\begin{tabular}[t]{c|cc}
    &
${\tilde G}_0$ &
$\lbrack 2 \rbrack_q  {\tilde G}_1$
\\
\hline
    $q^{-1}  B_{\alpha_1}$ &
    $1$ & $0$ 
\\
    $ B_{\alpha_0}$ &
    $0$ & $1$ 
\\
    $q B_{\delta+\alpha_0}$ &
$1$ &$0$
\end{tabular}}
\bigskip

\newpage
\noindent $W_{-2}$ is equal to 
$-(q-q^{-1})^{-1}\lbrack 2 \rbrack^{-4}_q$ times
\bigskip

\centerline{
\begin{tabular}[t]{c|ccc}
    &
${\tilde G}_0$ &
$\lbrack 2 \rbrack_q  {\tilde G}_1$ &
$\lbrack 2 \rbrack^2_q  {\tilde G}_2$
\\
\hline
    $q^{-2}  B_{ \delta+\alpha_1}$ & 
   $1$ & $0$ &  $0$  
\\
    $q^{-1}  B_{\alpha_1}$ &
   $0$ & $1$ & $0$ 
\\
    $ B_{\alpha_0}$ &
   $2$ & $0$ & $1$ 
   \\
    $q B_{\delta+\alpha_0}$ &
$0$ & $1$ &$0$
\\
    $q^{2} B_{2 \delta+\alpha_0}$ &
$1$ & $0$ &$0$
\end{tabular}}
\bigskip

\noindent $W_{-3}$ is equal to 
$-(q-q^{-1})^{-1}\lbrack 2 \rbrack^{-5}_q$ times
\bigskip

\centerline{
\begin{tabular}[t]{c|cccc}
    &
${\tilde G}_0$ &
$\lbrack 2 \rbrack_q  {\tilde G}_1$ &
$\lbrack 2 \rbrack^2_q  {\tilde G}_2$ &
$\lbrack 2 \rbrack^3_q  {\tilde G}_3$
\\
\hline
    $q^{-3}  B_{2 \delta+\alpha_1}$ &
 $1$ & $0$ &  $0$  &$0$
\\
$q^{-2}  B_{\delta+\alpha_1}$ & 
  $0$& $1$ & $0$ &  $0$  
\\
    $q^{-1}  B_{\alpha_1}$ &
   $3$& $0$ & $1$ & $0$ 
\\
    $ B_{\alpha_0}$ &
  $0$& $2$ & $0$ & $1$ 
\\
    $q B_{\delta+\alpha_0}$ &
  $3$ & $0$ & $1$&$0$
\\
    $q^{2} B_{2\delta+\alpha_0}$ &
 $0$ & $1$ & $0$ &$0$
\\
    $q^{3}  B_{3 \delta+\alpha_0}$ &
 $1$ & $0$ & $0$ &$0$
\end{tabular}}
\bigskip

\noindent $W_{-4}$ is equal to 
$-(q-q^{-1})^{-1}\lbrack 2 \rbrack^{-6}_q$ times
\bigskip

\centerline{
\begin{tabular}[t]{c|ccccc}
    &
${\tilde G}_0$ &
$\lbrack 2 \rbrack_q  {\tilde G}_1$ &
$\lbrack 2 \rbrack^2_q  {\tilde G}_2$ &
$\lbrack 2 \rbrack^3_q  {\tilde G}_3$ &
$\lbrack 2 \rbrack^4_q  {\tilde G}_4$
\\
\hline
    $q^{-4}  B_{3 \delta+\alpha_1}$ &
 $1$& $0$& $0$ & $0$&  $0$  
 \\
    $q^{-3}  B_{2 \delta+\alpha_1}$ &
 $0$& $1$& $0$ & $0$ & $0$  
\\
    $q^{-2}  B_{\delta+\alpha_1}$ & 
 $4$& $0$& $1$ & $0$ &  $0$  
\\
    $q^{-1}  B_{\alpha_1}$ &
 $0$&  $3$& $0$ & $1$ & $0$ 
\\
    $ B_{\alpha_0}$ &
 $6$& $0$& $2$ & $0$ & $1$ 
 \\
    $q B_{\delta+\alpha_0}$ &
 $0$&  $3$ & $0$ & $1$ &$0$
\\
    $q^{2} B_{2\delta+\alpha_0}$ &
$4$ & $0$ & $1$ & $0$ &$0$
\\
    $q^{3}  B_{3 \delta+\alpha_0}$ &
 $0$ & $1$ & $0$ & $0$&$0$
\\
    $q^{4}  B_{4 \delta+\alpha_0}$ & 
$1$ & $0$ &  $0$ & $0$ &$0$
\end{tabular}}
\bigskip

\noindent $W_{-5}$ is equal to 
$-(q-q^{-1})^{-1}\lbrack 2 \rbrack^{-7}_q$ times
\bigskip

\centerline{
\begin{tabular}[t]{c|cccccc}
    &
${\tilde G}_0$ &
$\lbrack 2 \rbrack_q  {\tilde G}_1$ &
$\lbrack 2 \rbrack^2_q  {\tilde G}_2$ &
$\lbrack 2 \rbrack^3_q  {\tilde G}_3$ &
$\lbrack 2 \rbrack^4_q  {\tilde G}_4$ &
$\lbrack 2 \rbrack^5_q  {\tilde G}_5$
\\
\hline
    $q^{-5}  B_{4\delta+\alpha_1}$ &
 $1$ & $0$ & $0$& $0$&  $0$ & $0$
 \\
    $q^{-4}  B_{3 \delta+\alpha_1}$ &
 $0$& $1$& $0$& $0$ & $0$&  $0$  
\\
    $q^{-3}  B_{2 \delta+\alpha_1}$ &
$5$& $0$& $1$& $0$ & $0$ & $0$  
\\
 $q^{-2}  B_{ \delta+\alpha_1}$ & 
 $0$& $4$& $0$& $1$ & $0$ &  $0$  
\\
$q^{-1}  B_{\alpha_1}$ &
 $10$& $0$&  $3$& $0$ & $1$ & $0$ 
\\
    $ B_{\alpha_0}$ &
$0$& $6$& $0$& $2$ & $0$ & $1$ 
\\
    $q B_{\delta+\alpha_0}$ &
  $10$& $0$&  $3$ & $0$ & $1$ &$0$
\\
    $q^{2} B_{2\delta+\alpha_0}$ &
  $0$& $4$ & $0$ & $1$ & $0$ &$0$
\\
    $q^{3}  B_{3 \delta+\alpha_0}$ &
  $5$& $0$ & $1$ & $0$ & $0$&$0$
\\
    $q^{4}  B_{4 \delta+\alpha_0}$ & 
 $0$& $1$ & $0$ &  $0$ & $0$ &$0$
\\
    $q^{5}  B_{5 \delta+\alpha_0}$ &
 $1$& $0$ & $0$ & $0$ & $0$ &$0$
\end{tabular}}
\bigskip

\noindent $W_{-6}$ is equal to $-(q-q^{-1})^{-1}\lbrack 2 \rbrack^{-8}_q$ times
\bigskip

\centerline{
\begin{tabular}[t]{c|ccccccc}
    &
${\tilde G}_0$ &
$\lbrack 2 \rbrack_q  {\tilde G}_1$ &
$\lbrack 2 \rbrack^2_q  {\tilde G}_2$ &
$\lbrack 2 \rbrack^3_q  {\tilde G}_3$ &
$\lbrack 2 \rbrack^4_q  {\tilde G}_4$ &
$\lbrack 2 \rbrack^5_q  {\tilde G}_5$ &
$\lbrack 2 \rbrack^6_q  {\tilde G}_6$ 
\\
\hline
  $q^{-6}  B_{5 \delta+\alpha_1}$ &
$1$&$0$& $0$ & $0$ & $0$& $0$&  $0$ 
\\
    $q^{-5}  B_{4 \delta+\alpha_1}$ &
 $0$& $1$ & $0$ & $0$& $0$&  $0$ & $0$
\\
    $q^{-4}  B_{3 \delta+\alpha_1}$ &
$6$& $0$& $1$& $0$& $0$ & $0$&  $0$  
\\
    $q^{-3}  B_{2\delta+\alpha_1}$ &
$0$&$5$& $0$& $1$& $0$ & $0$ & $0$  
\\
    $q^{-2}  B_{\delta+\alpha_1}$ & 
$15$ & $0$& $4$& $0$& $1$ & $0$ &  $0$  
\\
    $q^{-1}  B_{\alpha_1}$ &
$0$ & $10$& $0$&  $3$& $0$ & $1$ & $0$ 
\\
    $ B_{\alpha_0}$ &
$20$ &$0$& $6$& $0$& $2$ & $0$ & $1$ 
\\
    $q B_{\delta+\alpha_0}$ &
  $0$& $10$& $0$&  $3$ & $0$ & $1$ & $0$
\\
    $q^{2} B_{2\delta+\alpha_0}$ &
 $15$& $0$& $4$ & $0$ & $1$ & $0$ &$0$
\\
    $q^{3}  B_{3 \delta+\alpha_0}$ &
 $0$&  $5$& $0$ & $1$ & $0$ & $0$ &$0$
\\
    $q^{4}  B_{4 \delta+\alpha_0}$ & 
$6$& $0$& $1$ & $0$ &  $0$ & $0$ &$0$
\\
    $q^{5}  B_{5 \delta+\alpha_0}$ &
$0$& $1$& $0$ & $0$ & $0$ & $0$ &$0$
\\
    $q^{6}  B_{6 \delta+\alpha_0}$ &
 $1$& $0$& $0$ & $0$&  $0$ & $0$ &$0$
\end{tabular}}
\bigskip

\noindent $W_{-7}$ is equal to $-(q-q^{-1})^{-1}\lbrack 2 \rbrack^{-9}_q$ times
\bigskip

\centerline{
\begin{tabular}[t]{c|cccccccc}
    &
${\tilde G}_0$ &
$\lbrack 2 \rbrack_q  {\tilde G}_1$ &
$\lbrack 2 \rbrack^2_q  {\tilde G}_2$ &
$\lbrack 2 \rbrack^3_q  {\tilde G}_3$ &
$\lbrack 2 \rbrack^4_q  {\tilde G}_4$ &
$\lbrack 2 \rbrack^5_q  {\tilde G}_5$ &
$\lbrack 2 \rbrack^6_q  {\tilde G}_6$ &
$\lbrack 2 \rbrack^7_q  {\tilde G}_7$ 
\\
\hline
    $q^{-7}  B_{6 \delta+\alpha_1}$ &
$1$&$0$& $0$ & $0$ & $0$& $0$&  $0$ & $0$
\\
$q^{-6}  B_{5 \delta+\alpha_1}$ &
$0$& $1$& $0$ & $0$ & $0$& $0$&  $0$ & $0$
\\
    $q^{-5}  B_{4 \delta+\alpha_1}$ &
$7$ &$0$& $1$& $0$& $0$ & $0$&  $0$ & $0$ 
\\
    $q^{-4}  B_{3\delta+\alpha_1}$ &
$0$ &$6$& $0$& $1$& $0$ & $0$ & $0$ & $0$ 
\\
    $q^{-3}  B_{2\delta+\alpha_1}$ & 
$21$ &$0$& $5$& $0$& $1$ & $0$ &  $0$ & $0$ 
\\
    $q^{-2}  B_{ \delta+\alpha_1}$ &
$0$& $15$& $0$&  $4$& $0$ & $1$ & $0$ & $0$
\\
$q^{-1} B_{\alpha_1}$ &
$35$& $0$& $10$& $0$& $3$ & $0$ & $1$ & $0$ 
\\
    $ B_{\alpha_0}$ 
& $0$& $20$& $0$& $6$& $0$&  $2$ & $0$ & $1$ 
\\
    $q B_{\delta+\alpha_0}$ &
$35$&  $0$& $10$& $0$&  $3$ & $0$ & $1$ &$0$
\\
    $q^{2} B_{2\delta+\alpha_0}$ &
$0$& $15$& $0$& $4$ & $0$ & $1$ & $0$ &$0$
\\
    $q^{3}  B_{3 \delta+\alpha_0}$ &
$21$& $0$&  $5$& $0$ & $1$ & $0$ & $0$ &$0$
\\
    $q^{4}  B_{4 \delta+\alpha_0}$ & 
$0$& $6$& $0$& $1$ & $0$ &  $0$ & $0$ &$0$
\\
    $q^{5}  B_{5 \delta+\alpha_0}$ &
$7$&$0$& $1$& $0$ & $0$ & $0$ & $0$ &$0$
\\
    $q^{6}  B_{6 \delta+\alpha_0}$ &
$0$& $1$& $0$& $0$ & $0$&  $0$ & $0$ &$0$
\\
    $q^{7}  B_{7 \delta+\alpha_0}$ &
$1$& $0$& $0$& $0$ & $0$&  $0$ & $0$ &$0$
\end{tabular}}
\bigskip

\noindent $W_1= B_{\alpha_1}=
-(q-q^{-1})^{-1}\lbrack 2 \rbrack^{-2}_q
B_{\alpha_1}\tilde G_0 $.
\bigskip

\noindent $W_{2}$ is equal to 
$-(q-q^{-1})^{-1}\lbrack 2 \rbrack^{-3}_q$ times
\bigskip

\centerline{
\begin{tabular}[t]{c|cc}
    &
${\tilde G}_0$ &
$\lbrack 2 \rbrack_q  {\tilde G}_1$
\\
\hline
    $q^{-1}  B_{\delta+\alpha_1}$ &
    $1$ & $0$ 
\\
    $ B_{\alpha_1}$ &
    $0$ & $1$ 
\\
    $q B_{\alpha_0}$ &
$1$ &$0$
\end{tabular}}
\bigskip

\newpage
\noindent $W_{3}$ is equal to 
$-(q-q^{-1})^{-1}\lbrack 2 \rbrack^{-4}_q$ times
\bigskip

\centerline{
\begin{tabular}[t]{c|ccc}
    &
${\tilde G}_0$ &
$\lbrack 2 \rbrack_q  {\tilde G}_1$ &
$\lbrack 2 \rbrack^2_q  {\tilde G}_2$
\\
\hline
    $q^{-2}  B_{2 \delta+\alpha_1}$ & 
   $1$ & $0$ &  $0$  
\\
    $q^{-1}  B_{\delta+\alpha_1}$ &
   $0$ & $1$ & $0$ 
\\
    $ B_{\alpha_1}$ &
   $2$ & $0$ & $1$ 
   \\
    $q B_{\alpha_0}$ &
$0$ & $1$ &$0$
\\
    $q^{2} B_{\delta+\alpha_0}$ &
$1$ & $0$ &$0$
\end{tabular}}
\bigskip

\noindent $W_{4}$ is equal to 
$-(q-q^{-1})^{-1}\lbrack 2 \rbrack^{-5}_q$ times
\bigskip

\centerline{
\begin{tabular}[t]{c|cccc}
    &
${\tilde G}_0$ &
$\lbrack 2 \rbrack_q  {\tilde G}_1$ &
$\lbrack 2 \rbrack^2_q  {\tilde G}_2$ &
$\lbrack 2 \rbrack^3_q  {\tilde G}_3$
\\
\hline
    $q^{-3}  B_{3 \delta+\alpha_1}$ &
 $1$ & $0$ &  $0$  &$0$
\\
$q^{-2}  B_{2 \delta+\alpha_1}$ & 
  $0$& $1$ & $0$ &  $0$  
\\
    $q^{-1}  B_{\delta+\alpha_1}$ &
   $3$& $0$ & $1$ & $0$ 
\\
    $ B_{\alpha_1}$ &
  $0$& $2$ & $0$ & $1$ 
\\
    $q B_{\alpha_0}$ &
  $3$ & $0$ & $1$&$0$
\\
    $q^{2} B_{\delta+\alpha_0}$ &
 $0$ & $1$ & $0$ &$0$
\\
    $q^{3}  B_{2 \delta+\alpha_0}$ &
 $1$ & $0$ & $0$ &$0$
\end{tabular}}
\bigskip

\noindent $W_{5}$ is equal to 
$-(q-q^{-1})^{-1}\lbrack 2 \rbrack^{-6}_q$ times
\bigskip

\centerline{
\begin{tabular}[t]{c|ccccc}
    &
${\tilde G}_0$ &
$\lbrack 2 \rbrack_q  {\tilde G}_1$ &
$\lbrack 2 \rbrack^2_q  {\tilde G}_2$ &
$\lbrack 2 \rbrack^3_q  {\tilde G}_3$ &
$\lbrack 2 \rbrack^4_q  {\tilde G}_4$
\\
\hline
    $q^{-4}  B_{4 \delta+\alpha_1}$ &
 $1$& $0$& $0$ & $0$&  $0$  
 \\
    $q^{-3}  B_{3 \delta+\alpha_1}$ &
 $0$& $1$& $0$ & $0$ & $0$  
\\
    $q^{-2}  B_{2 \delta+\alpha_1}$ & 
 $4$& $0$& $1$ & $0$ &  $0$  
\\
    $q^{-1}  B_{\delta+\alpha_1}$ &
 $0$&  $3$& $0$ & $1$ & $0$ 
\\
    $ B_{\alpha_1}$ &
 $6$& $0$& $2$ & $0$ & $1$ 
 \\
    $q B_{\alpha_0}$ &
 $0$&  $3$ & $0$ & $1$ &$0$
\\
    $q^{2} B_{\delta+\alpha_0}$ &
$4$ & $0$ & $1$ & $0$ &$0$
\\
    $q^{3}  B_{2 \delta+\alpha_0}$ &
 $0$ & $1$ & $0$ & $0$&$0$
\\
    $q^{4}  B_{3 \delta+\alpha_0}$ & 
$1$ & $0$ &  $0$ & $0$ &$0$
\end{tabular}}
\bigskip

\noindent $W_{6}$ is equal to 
$-(q-q^{-1})^{-1}\lbrack 2 \rbrack^{-7}_q$ times
\bigskip

\centerline{
\begin{tabular}[t]{c|cccccc}
    &
${\tilde G}_0$ &
$\lbrack 2 \rbrack_q  {\tilde G}_1$ &
$\lbrack 2 \rbrack^2_q  {\tilde G}_2$ &
$\lbrack 2 \rbrack^3_q  {\tilde G}_3$ &
$\lbrack 2 \rbrack^4_q  {\tilde G}_4$ &
$\lbrack 2 \rbrack^5_q  {\tilde G}_5$
\\
\hline
    $q^{-5}  B_{5 \delta+\alpha_1}$ &
 $1$ & $0$ & $0$& $0$&  $0$ & $0$
 \\
    $q^{-4}  B_{4 \delta+\alpha_1}$ &
 $0$& $1$& $0$& $0$ & $0$&  $0$  
\\
    $q^{-3}  B_{3 \delta+\alpha_1}$ &
$5$& $0$& $1$& $0$ & $0$ & $0$  
\\
 $q^{-2}  B_{2 \delta+\alpha_1}$ & 
 $0$& $4$& $0$& $1$ & $0$ &  $0$  
\\
$q^{-1}  B_{\delta+\alpha_1}$ &
 $10$& $0$&  $3$& $0$ & $1$ & $0$ 
\\
    $ B_{\alpha_1}$ &
$0$& $6$& $0$& $2$ & $0$ & $1$ 
\\
    $q B_{\alpha_0}$ &
  $10$& $0$&  $3$ & $0$ & $1$ &$0$
\\
    $q^{2} B_{\delta+\alpha_0}$ &
  $0$& $4$ & $0$ & $1$ & $0$ &$0$
\\
    $q^{3}  B_{2 \delta+\alpha_0}$ &
  $5$& $0$ & $1$ & $0$ & $0$&$0$
\\
    $q^{4}  B_{3 \delta+\alpha_0}$ & 
 $0$& $1$ & $0$ &  $0$ & $0$ &$0$
\\
    $q^{5}  B_{4 \delta+\alpha_0}$ &
 $1$& $0$ & $0$ & $0$ & $0$ &$0$
\end{tabular}}
\bigskip

\noindent $W_{7}$ is equal to $-(q-q^{-1})^{-1}\lbrack 2 \rbrack^{-8}_q$ times
\bigskip

\centerline{
\begin{tabular}[t]{c|ccccccc}
    &
${\tilde G}_0$ &
$\lbrack 2 \rbrack_q  {\tilde G}_1$ &
$\lbrack 2 \rbrack^2_q  {\tilde G}_2$ &
$\lbrack 2 \rbrack^3_q  {\tilde G}_3$ &
$\lbrack 2 \rbrack^4_q  {\tilde G}_4$ &
$\lbrack 2 \rbrack^5_q  {\tilde G}_5$ &
$\lbrack 2 \rbrack^6_q  {\tilde G}_6$ 
\\
\hline
  $q^{-6}  B_{6 \delta+\alpha_1}$ &
$1$&$0$& $0$ & $0$ & $0$& $0$&  $0$ 
\\
    $q^{-5}  B_{5 \delta+\alpha_1}$ &
 $0$& $1$ & $0$ & $0$& $0$&  $0$ & $0$
\\
    $q^{-4}  B_{4 \delta+\alpha_1}$ &
$6$& $0$& $1$& $0$& $0$ & $0$&  $0$  
\\
    $q^{-3}  B_{3 \delta+\alpha_1}$ &
$0$&$5$& $0$& $1$& $0$ & $0$ & $0$  
\\
    $q^{-2}  B_{2 \delta+\alpha_1}$ & 
$15$ & $0$& $4$& $0$& $1$ & $0$ &  $0$  
\\
    $q^{-1}  B_{\delta+\alpha_1}$ &
$0$ & $10$& $0$&  $3$& $0$ & $1$ & $0$ 
\\
    $ B_{\alpha_1}$ &
$20$ &$0$& $6$& $0$& $2$ & $0$ & $1$ 
\\
    $q B_{\alpha_0}$ &
  $0$& $10$& $0$&  $3$ & $0$ & $1$ & $0$
\\
    $q^{2} B_{\delta+\alpha_0}$ &
 $15$& $0$& $4$ & $0$ & $1$ & $0$ &$0$
\\
    $q^{3}  B_{2 \delta+\alpha_0}$ &
 $0$&  $5$& $0$ & $1$ & $0$ & $0$ &$0$
\\
    $q^{4}  B_{3 \delta+\alpha_0}$ & 
$6$& $0$& $1$ & $0$ &  $0$ & $0$ &$0$
\\
    $q^{5}  B_{4 \delta+\alpha_0}$ &
$0$& $1$& $0$ & $0$ & $0$ & $0$ &$0$
\\
    $q^{6}  B_{5 \delta+\alpha_0}$ &
 $1$& $0$& $0$ & $0$&  $0$ & $0$ &$0$
\end{tabular}}
\bigskip

\noindent $W_{8}$ is equal to $-(q-q^{-1})^{-1}\lbrack 2 \rbrack^{-9}_q$ times
\bigskip

\centerline{
\begin{tabular}[t]{c|cccccccc}
    &
${\tilde G}_0$ &
$\lbrack 2 \rbrack_q  {\tilde G}_1$ &
$\lbrack 2 \rbrack^2_q  {\tilde G}_2$ &
$\lbrack 2 \rbrack^3_q  {\tilde G}_3$ &
$\lbrack 2 \rbrack^4_q  {\tilde G}_4$ &
$\lbrack 2 \rbrack^5_q  {\tilde G}_5$ &
$\lbrack 2 \rbrack^6_q  {\tilde G}_6$ &
$\lbrack 2 \rbrack^7_q  {\tilde G}_7$ 
\\
\hline
    $q^{-7}  B_{7 \delta+\alpha_1}$ &
$1$&$0$& $0$ & $0$ & $0$& $0$&  $0$ & $0$
\\
$q^{-6}  B_{6 \delta+\alpha_1}$ &
$0$& $1$& $0$ & $0$ & $0$& $0$&  $0$ & $0$
\\
    $q^{-5}  B_{5 \delta+\alpha_1}$ &
$7$ &$0$& $1$& $0$& $0$ & $0$&  $0$ & $0$ 
\\
    $q^{-4}  B_{4 \delta+\alpha_1}$ &
$0$ &$6$& $0$& $1$& $0$ & $0$ & $0$ & $0$ 
\\
    $q^{-3}  B_{3 \delta+\alpha_1}$ & 
$21$ &$0$& $5$& $0$& $1$ & $0$ &  $0$ & $0$ 
\\
    $q^{-2}  B_{2 \delta+\alpha_1}$ &
$0$& $15$& $0$&  $4$& $0$ & $1$ & $0$ & $0$
\\
$q^{-1} B_{\delta+\alpha_1}$ &
$35$& $0$& $10$& $0$& $3$ & $0$ & $1$ & $0$ 
\\
    $ B_{\alpha_1}$ 
& $0$& $20$& $0$& $6$& $0$&  $2$ & $0$ & $1$ 
\\
    $q B_{\alpha_0}$ &
$35$&  $0$& $10$& $0$&  $3$ & $0$ & $1$ &$0$
\\
    $q^{2} B_{\delta+\alpha_0}$ &
$0$& $15$& $0$& $4$ & $0$ & $1$ & $0$ &$0$
\\
    $q^{3}  B_{2 \delta+\alpha_0}$ &
$21$& $0$&  $5$& $0$ & $1$ & $0$ & $0$ &$0$
\\
    $q^{4}  B_{3 \delta+\alpha_0}$ & 
$0$& $6$& $0$& $1$ & $0$ &  $0$ & $0$ &$0$
\\
    $q^{5}  B_{4 \delta+\alpha_0}$ &
$7$&$0$& $1$& $0$ & $0$ & $0$ & $0$ &$0$
\\
    $q^{6}  B_{5 \delta+\alpha_0}$ &
$0$& $1$& $0$& $0$ & $0$&  $0$ & $0$ &$0$
\\
    $q^{7}  B_{6 \delta+\alpha_0}$ &
$1$& $0$& $0$& $0$ & $0$&  $0$ & $0$ &$0$
\end{tabular}}
\bigskip

\bigskip

\noindent Paul Terwilliger \hfil\break
\noindent Department of Mathematics \hfil\break
\noindent University of Wisconsin \hfil\break
\noindent 480 Lincoln Drive \hfil\break
\noindent Madison, WI 53706-1388 USA \hfil\break
\noindent email: {\tt terwilli@math.wisc.edu }\hfil\break

\end{document}